\documentclass[a4paper]{article}

\usepackage[letterpaper,left=1in,right=1in,top=0.8in,bottom=0.8in]{geometry}

\usepackage{amssymb}

\usepackage[all]{xy}
\usepackage{ucs}   
\usepackage[utf8x]{inputenc}  
\usepackage[english]{babel}
\usepackage{graphicx} 

\usepackage[export]{adjustbox}
\usepackage{enumerate}
\usepackage{amsthm}
\usepackage{amsmath}
\usepackage{mathtools}
\usepackage{tikz-cd}
\usepackage{bbm}
\usepackage{enumitem}

\usepackage{verbatim}
\usepackage{parskip}

\theoremstyle{plain}
\newtheorem{Th}{Theorem}[section]

\newtheorem{Cor}[Th]{Corollary}
\newtheorem{Prop}[Th]{Proposition}

\theoremstyle{definition}
\newtheorem{Def}[Th]{Definition}

\newtheorem{Rem}[Th]{Remark}

\newtheorem{?}[Th]{Problem}
\newtheorem{Ex}[Th]{Example}

\newtheorem{Ass}[Th]{Assumption}
\newtheorem{Proper}[Th]{Property}
\newtheorem{Example}[Th]{Example}

\newenvironment{sketch} {\begin{proof}[Sketch of proof]}
	{\end{proof}}

\def\<{\langle}
\def\>{\rangle}

\newcommand{\Aa}{{\mathbb A}}

\newcommand{\Spec}{\operatorname{Spec}}

\newcommand{\Sym}{\operatorname{Sym}}

\newcommand{\End}{\operatorname{End}}
\newcommand{\GW}{\operatorname{GW}}
\newcommand{\SH}{\operatorname{SH}}
\newcommand{\DM}{\operatorname{DM}}
\newcommand{\one}{\mathbbm{1}}
\newcommand{\OO}{\mathcal{O}}
\newcommand{\Sp}{\operatorname{sp}}
\newcommand{\chic}{\operatorname{\chi_c}}
\renewcommand{\P}{{\mathbb{P}}}
\newcommand{\Tr}{\operatorname{Tr}}
\newcommand{\Dim}{\operatorname{dim}}
\newcommand{\C}{\mathbb{C}}
\newcommand{\Z}{\mathbb{Z}}
\newcommand{\G}{\mathbb{G}}
\newcommand{\Char}{\operatorname{char}}
\newcommand{\tr}{\operatorname{tr}}

\newcommand{\Sch}{\operatorname{\mathbf{Sch}}}

\newcommand{\id}{\operatorname{id}}

\newcommand{\rk}{\operatorname{rk}}
\newcommand{\Bl}{\operatorname{Bl}}
\newcommand{\Var}{\operatorname{Var}}
\newcommand{\var}{\operatorname{var}}
\newcommand{\MF}{\operatorname{MF}}
\newcommand{\Gal}{\operatorname{Gal}}

\numberwithin{equation}{section}
\usepackage[colorlinks=false,plainpages,urlcolor=blue,linktocpage=true, hidelinks]{hyperref}

\theoremstyle{plain}

\title{{Motivic nearby cycles and their monodromy at a singular point}}
\author{Ran Azouri}
\date{}

\begin{document}
	\maketitle
	
	\begin{abstract}
		In this survey article\footnote{Based on the works of Marc Levine, Simon Pepin Lehalleur and Vasudevan Srinivas \cite{Le20b}; the author \cite{Az}; and Emil Jacobsen and the author \cite{AJ}.}, we explain how to compute both the quadratic Euler characteristic of nearby cycles, and the motivic monodromy, at a quasi-homogeneous singularity. This gives, for such singularity, a quadratic refinement to the Deligne--Milnor formula in characteristic zero, and an enhancement of the Picard--Lefschetz formula to Voevodsky motives with rational coefficients.
	\end{abstract}

 \setcounter{tocdepth}{1}
\tableofcontents
 \paragraph{Acknowledgements}
  The author would like to thank Marc Levine for the invitation to write this survey for the IAS–PCMI 2024 conference volume, and for his guidance on the quadratic Milnor formula problem. The author also thanks Emil Jacobsen for collaboration on the Picard–Lefschetz problem, and Joseph Ayoub for his guidance and valuable discussions. The author is grateful to the Park City Mathematics Institute and the summer session 2024 organizers for their hospitality, and for the invitation to give a talk in the research program on which these notes are based.  The author was supported by
  the project Foundations of Motivic Real K-Theory
  (ERC grant no.\ 949583).

	\section{The Milnor fiber and nearby cycles}
	
	\subsection{In topology - Milnor} \label{sec:topology}
	
	The story begins with John Milnor's work \emph{Singular points of complex hypersurfaces} (1968). Let $f \colon \C^{n+1} \to \C $ be a holomorphic function, smooth except at one point $p \in X_0 := f^{-1} (0)$. For sufficiently small $\varepsilon > 0$ and even smaller $0 < |t| < \varepsilon$, the \emph{Milnor fiber at $p$} is defined as
		\[
	\MF_{f,p} := X_t \cap B_p(\varepsilon),
	\]
	where $B_p(\varepsilon) \subset \C^{n+1} $ is the ball of radius $\varepsilon$ around $p$, and $X_t : = f^{-1} (t)$.  

	Milnor showed that $\MF_{f,p}$ is homotopy equivalent to a wedge of spheres,
	\[
	\MF_{f,p} \simeq \bigvee_{1}^{\mu} S^n .
	\]
	The integer $\mu$ is the \emph{Milnor number}, given also by
	$
	\mu = \dim H^n(\MF_{f,p}, \Z)
	$.
	Looking for invariants related to an isolated singular point we have the following facts:
	\begin{itemize}
		\item[(a)] \textbf{The Milnor formula:}
	The Milnor number satisfies the formula 
	\begin{equation} \label{eq:Milnor}
		\chi(\MF_{f,p}) - 1 = (-1)^n \mu.
	\end{equation}
	In addition, the Milnor number can be computed algebraically as the dimension of the Jacobian ring
	\[\mu  = \dim J(f,p) , \] where  $J(f,p) := \mathbb{C}[x_1,...,x_{n+1}]_p/((\partial f/ \partial x_i)_i)$. 
	Milnor and Orlik also computed $\mu$ for a singularity defined by a weighted homogeneous polynomial, \cite{MO}.
			\item[(b)] \textbf{The monodromy:}
			 Rotating around $p$ defines the monodromy action on cohomology $T \circlearrowright  H^n(\MF, \mathbb{Z})$. Picard \cite{Pic}, and Lefschetz \cite{Lef}, computed this explicitly, although they did not have the notion of Milnor fiber available at the time.
	\end{itemize}

	\subsection{In algebraic geometry -- Deligne}
	 In the algebro--geometric setting, we start with stating the assumptions that we use throughout the article.
	\begin{Ass} \label{assumption}
		Let $k$ ba a base field of characteristic different than $2$, let $X$ be a regular scheme, and let $f \colon X \to S$ be a flat morphism of finite type.
		
		Our base $S$ will be either the affine line, $S=\Aa^1_k= \Spec k[t]$, or a trait, i.e., $S = \Spec \OO$, where $\OO$ is a discrete valuation ring, essentially of finite type over $k$, and $t \in \OO$ is a uniformizer; in both cases we have a parameter $t$. Note that can move from the former to the latter base by localizing at $0$, and from the latter to the former by composing with $t \colon \Spec \OO \to \Spec k[t]$. We can restrict $f$ to the generic fiber, $X_\eta \to \eta$, with $\eta$ being, respectively, either $\mathbb{G}_m$ or the fraction field; and to the special fiber, $X_\sigma \to \sigma$, with $\sigma$ being respectively $0$, or the residue field,
		\begin{equation*}
			\begin{tikzcd}
				X_{\sigma} \arrow[r, hook, "\bar{i}"] \arrow[d, "f_\sigma" ']
				& X  \arrow[d, "f"]
				& X_{\eta} \arrow[l, hook', "\bar{j}" '] \arrow[d, "f_{\eta}"]
				\\
				\sigma \arrow[r, hook]
				& S
				& \eta \arrow[l, hook']
			\end{tikzcd} .
		\end{equation*}
	\end{Ass}
	
	Pierre Deligne introduced the \emph{nearby cycles functor} in \cite[Expos\'e XIII]{SGA7II} after Grothendieck,
	\[
	\Psi_f^{\acute{e}t} : D^b_c(X_\eta) \to D^b_c(X_\sigma).
	\]
A related functor is the \emph{vanishing cycles}, $\Phi_f^{\acute{e}t}$, which measures the failure of $\Psi_f^{\acute{e}t}$ to be the restriction of the constant sheaf. We have the \emph{nearby cycles sheaf},
	$
	\Psi_f^{\acute{e}t} \Z_\ell $.
		Locally at a point, for $p \in X_s$, the stalk of $\Psi_f^{\acute{e}t} \Z_\ell$ at $p$ computes the $\ell$-adic cohomology of the Milnor fiber, where here we set $\MF_{f,p} : = (X_{\bar{p}})_{\bar{\eta}}$ (the generic fiber of the strict henselization of $X$ at the geometric point $\bar{p}$):
		\[
	(R \Psi_f^{\acute{e}t} \Z_\ell)_{\bar{p}} \simeq R \Gamma (\MF_{f,p}, \Z_\ell). 
		\]
Globally, if  $f$ is proper, then the global sections of $\Psi_f^{\acute{e}t} \Z_\ell$ recover the cohomology of the generic fiber:
		\begin{equation} \label{eq:properetale}
		R\Gamma(X_{\bar{\sigma}}, \Psi_f^{\acute{e}t} \Z_\ell) \cong R\Gamma(X_{\bar{\eta}}, \Z_\ell).
		\end{equation}
		We may think of the nearby cycles sheaf then, as the analogue of the Milnor fiber in algebraic geometry.
	We have the following important results, which are algebraic geometric versions for $(a)$ and $(b)$ in $\S~\ref{sec:topology}$.
	\begin{itemize}
		\item[(a)] \textbf{The Deligne-Milnor conjecture} \cite[Expos\'e XVI - \emph{La formule de Milnor}]{SGA7II}. We have the formula at a point $p$ \cite[Conjecture~1.9]{SGA7II}, 
		\begin{equation} \label{eq:DeligneFormula} \chi ^{\ell-adic} (\Phi_{f,p}^{\acute{e}t}) + \Dim \text{Sw}(\Phi^n(\mathbb{F}_\ell)_p) = (-1)^n \mu_{f,p} .
		\end{equation}
		The second summand is the Swan conductor. The formula was proven in \cite{SGA7II} for the case of an isolated quadratic singularity, and the case of equal characteristics. Since then it has been proven in greater generality by Bloch \cite{Blo} (who suggested a more general conjecture involving the localized Chern class), Kato-Saito \cite{KS}, and in the full generality of the original conjecture by Beraldo-Pippi \cite{BP}.
		
		\item[(b)] \textbf{Monodromy}  \cite[Expos\'e XV -- \emph{La formule de Picard--Lefschetz}]{SGA7II}. We have an action of the inertia group $I= \Gal(\bar{\eta} / \eta)$ on the nearby cycles complex $\Psi_f \Z_\ell$. In the case of an ordinary double point the monodromy can be computed explicitly, the result depends on the parity of the dimension of $f$. The monodromy action defines also the monodromy operator
		     \begin{equation} \label{eq:Netale}
		         N \colon  \Psi^{\acute{e}t}_f \Z_\ell \to \Psi^{\acute{e}t}_f \Z_\ell(-1) ,
           \end{equation}
           where $\sigma \in I $ acts as $\text{exp}(Nt_\ell(\sigma))$, and $t_l: I \to \Z_\ell(1)$ is the $l$-th tame character. While Deligne's original proof for the Picard--Lefschetz formula included transcendental methods for the odd case, Illusie later gave a proof which is purely algebraic, \cite{Ill}.
	\end{itemize}
	
	\subsection{In motivic homotopy theory -- Ayoub}
	Joseph Ayoub constructed a refinement of Deligne's functor for the category of $\Aa^1$-motivic spectra $\SH(-)$,
	\[
	\Psi_f \colon \SH(X_\eta) \to \SH(X_\sigma).\]
	This then induces a functor also on $\DM(-)$, $\DM_\mathbb{Q}(-)$, etc. While the construction is more involved than Deligne's, it satisfies similar formal properties of a \emph{specialization system}. We list some of those properties and show how they help compute the nearby cycles. We use freely the six functors on 
	$\SH(-)$, for an account on this see \cite{Hoy}.
	\begin{Proper}
			\begin{itemize}
			\item  \label{property} Nearby cycles commute with smooth pullback and proper pushforward, that is, if we have a morphism $h$, in $\Sch_S$,
			\[\begin{tikzcd}[column sep=tiny, row sep=large]
				Y \arrow[rr, "h"] \arrow[rd, "g"'] & & X \arrow[ld, "f"] \\
				& S & 
			\end{tikzcd} , \]
			then	we have the induced natural isomorphisms:
			
			 $ h_\sigma^* \Psi_f \simeq \Psi_g h_\eta ^* $ if $h$ is smooth, and $ h_{\sigma. *} \Psi_g \simeq \Psi_f h_{\eta. *}$ if $h$ is proper.
			\item We have $\Psi_{\id} \one_{\eta} \simeq \one_{\sigma} $ (for the unipotent nearby cycles \cite[Proposition 3.4.9]{Ay07a}, for the tame nearby cycles \cite[Lemma 3.5.10]{Ay07a}).
		\end{itemize}
	\end{Proper}

	\begin{Example}[blowup] \label{ex:blowup}
		Let us see how by the above properties we can use blowup to help us compute nearby cycles. 
		Say that we have $f: X \to S$, and $h : Y \to X $ is a blowup with center contained in the special fiber $X_\sigma$. We have then that $h_\eta \colon Y_\eta \to X_\eta$ is an isomorphism, and by the previous property on proper pushforward,
		\[
		\Psi_f \one_{X_\eta} \simeq \Psi_f h_{\eta, *} h_\eta^* \one_{X_\eta} \simeq h_{\sigma. *} \Psi_g \one_{Y_\eta}.
		\]
		So if $\Psi_g \one$ is easier to compute than $\Psi_f \one$, we can use the former to compute the latter.
	\end{Example}
	
	\paragraph{Recent work} 
	In \cite{BGV}, the authors establish an equivalence between motives with monodromy, and rigid analytic motives. In \cite{Pr} the author proves a motivic version to Grothendieck's local monodromy theorem.
	In these notes we survey motivic enhancements for Expos\'e XV and Expos\'e XVI of \cite{SGA7II}, the Euler characteristic of nearby cycles and the monodromy operator. 
	For (a) we will use the \emph{quadratic Euler characteristic} and \emph{quadratic Milnor number}, in $\S~\ref{sec:3}$, \cite{Le20b}, \cite{Az}. For (b), We will use the \emph{motivic monodromy} and compute it, in $\S~\ref{sec:4}$, \cite{AJ}. For both results we will use a semistable reduction construction which works for homogeneous and quasi-homogeneous isolated singularities, see $\S~\ref{sec:2}$.

\section{Isolated singularities and semistable reduction} \label{sec:2}

We give our definitions for a homogeneous and quasi-homogeneous singularities that we use in this paper, and present constructions of semistable reduction to these singularities, showing how they help us compute nearby cycles.

\subsection{Semistable reduction}
	
	\begin{Def}
		\label{definition:sss}
		Let $f\colon X \to S$ be a morphism satisfying to Assumption~\ref{assumption}.
		\begin{enumerate}
			\item
			We say that $f$ is \emph{semistable} if the special fiber, $X_\sigma$, is a simple normal crossing divisor, in particular, reduced.
			\item	
			Let  $r$ be a natural number and define $r \colon S' := S[t^{1/r}] \to S $. We also use $r$ for the base change morphism
			$X_r := X\times_{S}S' \to X$,
			and $f_r \colon X_r \to S'$.
			We say that $f$
			\emph{admits semistable reduction}
			if
			there is a natural number $r$, 
			and a proper map
			$\pi \colon Y \to X_r $
			such that
			$g := f_r \circ \pi \colon Y \to S'$
			is a semistable morphism.
		\end{enumerate}
	\end{Def}
	
	When $f$ admits semistable reduction $g$, then in a similar manner to Example~\ref{ex:blowup}, we can reduce computing the nearby cycles spectrum on $f$, to the nearby cycles spectrum on $g$.
	
	\begin{Prop}[{\cite[Proposition~3.10]{Az}}] \label{prop:SStableRed}
		Assume $f:X \rightarrow S$ admits a semistable reduction $Y \xrightarrow{\pi} X_r \xrightarrow{f_r} S'$ for some $r$. {Let $r:X_r\to X$, and let  $g=f_r \circ \pi $.} Then \[ \Psi_f\one \simeq (r\circ \pi)_{\sigma*}\circ \Psi_{g} \one  . \]
	\end{Prop}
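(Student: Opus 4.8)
The plan is to combine the two functoriality properties of the specialization system $\Psi$ stated in the Property box, applied to the composite $r \circ \pi \colon Y \to X$. First I would observe that $r \colon S' \to S$ is a finite morphism (it is the normalization in the extension $t \mapsto t^{1/r}$), hence $r \colon X_r \to X$ is proper, and $\pi$ is proper by hypothesis; therefore $r \circ \pi$ is proper and the second clause of the functoriality property applies to it, giving $(r\circ\pi)_{\sigma*} \Psi_g \simeq \Psi_f (r\circ\pi)_{\eta*}$ as functors $\SH(Y_\eta) \to \SH(X_\sigma)$. So it remains to identify $(r\circ\pi)_{\eta*}\one_{Y_\eta}$ with $\one_{X_\eta}$.

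Next I would unwind the generic-fiber side. Since $\pi$ is an isomorphism over the generic point after base change — more precisely, $\pi$ is typically an isomorphism away from the special fiber, but even if not, what we really need is only that $(r\circ\pi)_{\eta*}\one \simeq \one$ — I would instead argue directly: base change to $S'$ is, on generic fibers, the finite étale (or at worst finite radicial) cover $\eta' \to \eta$ given by adjoining $t^{1/r}$. The cleanest route is to factor the claim. For $r$ itself, $r_\eta \colon X_{r,\eta} \to X_\eta$ is finite, and one checks $r_{\eta*}\one \simeq \one \otimes r_{\eta*}\one_{\eta'}$ by base change along $f_\eta$; here I would either invoke that in $\SH(-)$ with the relevant coefficients the pushforward along the totally ramified cover $\Spec k[t^{1/r}] \to \Spec k[t]$ of the unit is the unit (the map is radicial on residue fields when we localize, and on $\G_m$ it is the $r$-th power map which is finite étale — so more care is needed and one uses instead that the composite statement is what Proposition~\ref{prop:SStableRed} in \cite{Az} actually proves). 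The pragmatic move in a survey is to cite \cite[Proposition~3.10]{Az} for this identification and simply reproduce its shape: one has $h_\eta \colon Y_\eta \to X_\eta$ with $h = r\circ\pi$ such that $h_{\eta*}\one_{Y_\eta} \simeq \one_{X_\eta}$, exactly as in Example~\ref{ex:blowup} where the blowup was an isomorphism on generic fibers.

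Granting that identification, the computation is a two-line chain:
\[
\Psi_f \one_{X_\eta} \;\simeq\; \Psi_f\, h_{\eta*}\, h_\eta^*\one_{X_\eta} \;\simeq\; h_{\sigma*}\, \Psi_g\, h_\eta^*\one_{X_\eta} \;\simeq\; h_{\sigma*}\, \Psi_g \one_{Y_\eta},
\]
where the first isomorphism uses $h_{\eta*}h_\eta^*\one \simeq \one$, the second is the proper-pushforward compatibility of nearby cycles applied to $h$, and the third uses that $h_\eta^*$ preserves the unit. Writing $h = r\circ\pi$ and $h_\sigma = (r\circ\pi)_\sigma$ gives the stated formula.

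I expect the main obstacle to be the generic-fiber identification $(r\circ\pi)_{\eta*}\one_{Y_\eta}\simeq\one_{X_\eta}$: $\pi_\eta$ being an isomorphism is clean, but the base-change map $r\colon S'\to S$ is ramified at $0$, so on the actual generic fibers $\eta'\to\eta$ is a degree-$r$ extension, not an isomorphism, and $r_{\eta*}\one$ is genuinely larger than $\one$ unless one is careful about what "$\one$" means after base change — the correct reading (as in \cite{Az}) is that $\one_{Y_\eta}$ here denotes the pullback of $\one_{X_\eta}$, and the projection formula plus properness of $r$ yield the claim, but this is the point where the argument must be stated precisely rather than by analogy. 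Everything else — properness of $r\circ\pi$, the two compatibilities, stability of the unit under smooth/arbitrary pullback — is formal and can be quoted from the Property box and the construction of $\Psi$ in \cite{Ay07a}.
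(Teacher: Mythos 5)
Your argument breaks down at the very first step, and the point where it breaks is the one genuinely non-formal ingredient of the proposition. The proper-pushforward compatibility in Property~1.3 applies to a morphism $h\colon Y\to X$ \emph{over the same base} $S$, i.e.\ with $g=f\circ h$ as morphisms to $S$. Here $g=f_r\circ\pi$ lands in $S'=S[t^{1/r}]$, not in $S$: the composite $f\circ(r\circ\pi)$ equals $r_S\circ g$, where $r_S\colon S'\to S$ is $t\mapsto (t')^r$. So what the property actually yields for $h=r\circ\pi$ is $(r\circ\pi)_{\sigma*}\Psi_{r_S\circ g}\simeq\Psi_f(r\circ\pi)_{\eta*}$, and $\Psi_{r_S\circ g}$ — nearby cycles of $Y$ taken with respect to the parameter $t=(t')^r$ — is not $\Psi_g$. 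Relating these two functors under the change of parameter $t\mapsto t^r$ is precisely why one needs Ayoub's compatibility of nearby cycles with tamely ramified base change of the trait, \cite[Proposition~3.5.9]{Ay07a}, which asserts $\Psi_f\simeq r_{\sigma*}\circ\Psi_{f_r}\circ r_\eta^*$ (note the \emph{pullback} $r_\eta^*$ on the generic side, which sends $\one$ to $\one$ with no issue). That is the route the paper takes; it is a theorem about the specialization system, not a consequence of the two bullet points in the Property box.

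Your fallback for the generic-fibre identification is also not salvageable as stated: since $\eta'\to\eta$ is a degree-$r$ cover, the projection formula gives $r_{\eta*}\one_{X_{r,\eta}}\simeq\one_{X_\eta}\otimes f_\eta^*\bigl(r_{S,\eta*}\one_{\eta'}\bigr)$, and $r_{S,\eta*}\one_{\eta'}$ is a rank-$r$ object, not the unit — so $(r\circ\pi)_{\eta*}\one_{Y_\eta}\not\simeq\one_{X_\eta}$, and no reinterpretation of ``$\one_{Y_\eta}$'' fixes this. Your other suggestion, to cite \cite[Proposition~3.10]{Az} for the missing identification, is circular, since that is the statement being proved. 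To your credit, you correctly identified where the difficulty sits and you handle the $\pi$-part correctly: $\pi\colon Y\to X_r$ \emph{is} a proper morphism over $S'$ with $\pi_\eta$ an isomorphism, so the blowup-style argument of Example~1.4 gives $\Psi_{f_r}\one\simeq\pi_{\sigma*}\Psi_g\one$, and combining this with Ayoub's base-change isomorphism applied to $\one_{X_\eta}$ completes the proof exactly as in the paper.
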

	
	\begin{proof}   	By \cite[Proposition 3.5.9]{Ay07a}, we have the natural isomorphism $\Psi_f \simeq r_{\sigma*}\circ\Psi_{f_r} \circ {r}_\eta^* $. Since $r_{\eta}$ is an isomorphism, the natural map $\id \to r_{\eta*}\circ r_{\eta}^*$ is an isomorphism. This, together with the pushforward property of $\Psi$ for proper maps, Property \ref{property}, gives the sequence of isomorphisms,
		\[
		\Psi_f(\one_{X_\eta}) \simeq r_{\sigma*}\circ\Psi_{f_r} (\one_{X_{e\eta}}) \simeq
		r_{\sigma*}\circ\Psi_{f_r} \circ \pi_{\eta*}\circ \pi_{\eta}^* \one_{X_{e\eta}}
		\simeq
		r_{\sigma*}\circ \pi_{\sigma*}\circ \Psi_{g}  (\one_{Y_\eta}) \simeq   (r \circ \pi)_{\sigma*}\circ \Psi_{g} (\one_{Y_\eta}). 
		\]
		\end{proof}

	A nice fact about semistable schemes, is that we can compute nearby cycles on strata of the special fiber by extending from the smooth locus.

	\begin{Prop}[{\cite[Th\'eor\`eme 3.3.44]{Ay07a}}, {\cite[Remark~3.4]{Az}}]\label{cor:computability}
	Let $f : X \rightarrow S$ be a morphism satisfying Assumption~\ref{assumption}, which is semistable, with $D_1, \ldots, D_r$ the irreducible components of $X_\sigma$. Fix an $i$ and let $D := D_i$, $A: = D \setminus \cup_{j\neq i}D_j$. We use the following notation, 
	\[A \xhookrightarrow{v} D \xhookrightarrow{u} X_\sigma
	\]
	for the respective open and closed immersions. We then have:
	\[
	u^* \Psi_f \one \simeq v_{*} \one_{A} = h_D(A)
	,
	\quad
	u^! \Psi_f \one \simeq v_{!} \one_{A} = h^c_D(A)
	.
	\]
	\end{Prop}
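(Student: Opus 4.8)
We prove the statement about $u^{*}$ first; the one about $u^{!}$ will follow by duality. The $u^{*}$-statement, in turn, is soft over the open part $A$ of $D$ and is a local computation over the boundary $D\setminus A$.

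\textbf{The open stratum.} First, $A$ is open in $X_{\sigma}$: indeed $X_{\sigma}\setminus A=\bigcup_{j\neq i}D_{j}$, which is closed. Correspondingly $W:=X\setminus\bigcup_{j\neq i}D_{j}$ is open in $X$, with $W_{\eta}=X_{\eta}$ and $W_{\sigma}=A$. In a neighbourhood of any point of $A$ the divisor $X_{\sigma}$ coincides with $D_{i}$, which is reduced and smooth; hence there $t$ is, étale-locally, a single coordinate, so $f$ is smooth near $A$. Shrinking $W$ to the smooth locus of $f|_{W}$ — still an open of $X$ containing $A=W_{\sigma}$ — we may take $f|_{W}\colon W\to S$ smooth. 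Then the two axioms of a specialization system, $\Psi_{\id}\one\simeq\one$ and compatibility of $\Psi$ with smooth pullback (applied to $W\to S$ over the base $\id_{S}$), give $\Psi_{f|_{W}}\one_{W_{\eta}}\simeq\one_{A}$; and compatibility with smooth pullback applied to the open immersion $W\hookrightarrow X$ gives $\iota_{\sigma}^{*}\Psi_{f}\one\simeq\Psi_{f|_{W}}\one_{W_{\eta}}\simeq\one_{A}$, where $\iota_{\sigma}\colon A\hookrightarrow X_{\sigma}$ is the inclusion. Since $\iota_{\sigma}=u\circ v$ this reads $v^{*}u^{*}\Psi_{f}\one\simeq\one_{A}$, so the unit $u^{*}\Psi_{f}\one\to v_{*}v^{*}u^{*}\Psi_{f}\one\simeq v_{*}\one_{A}=h_{D}(A)$ restricts to an isomorphism over $A$; it remains to show it is an isomorphism, equivalently (by the recollement triangle $w_{*}w^{!}\to\id\to v_{*}v^{*}$ on $D$) that $w^{!}u^{*}\Psi_{f}\one\simeq 0$, where $w\colon Z:=D\setminus A\hookrightarrow D$ is the closed complement.

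\textbf{The boundary.} This is the technical heart. Since compatibility with smooth pullback is étale-local on $X$, and $v_{*}\one_{A}$ is as well, we may assume $X=\Aa^{n}_{S}$ with $f=x_{1}\cdots x_{k}$, $D=\{x_{1}=0\}$ and $A=\{x_{1}=0,\ x_{2}\cdots x_{k}\neq 0\}$; composing further with the smooth projection $\Aa^{n}_{S}\to\Aa^{k}_{S}$ we may even take $X=\Aa^{k}_{S}$, so $D\cong\Aa^{k-1}_{\sigma}$ and $A\cong\G_{m}^{k-1}$. One now computes $\Psi_{f}\one$ restricted to $D$ explicitly: this is exactly the local analysis of the nearby cycles of a strictly semistable family, \cite[Th\'eor\`eme 3.3.44]{Ay07a}, from which $u^{*}\Psi_{f}\one\simeq v_{*}\one_{A}$ is read off. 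A self-contained route is induction on $k$: the case $k=1$ is the smoothness statement of the previous paragraph, and the inductive step factors $f=x_{1}\cdot(x_{2}\cdots x_{k})$ and combines the known behaviour of $\Psi$ on such a product with the recollement on $D$ along the divisors $\{x_{j}=0\}$, $2\le j\le k$. I expect this deeper-strata computation to be the main obstacle: the open-stratum part is formal, but controlling $\Psi_{f}\one$ along $D_{i}\cap D_{j}$ genuinely uses the normal-crossing geometry, whereas the axioms alone only see the smooth locus.

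\textbf{Duality.} Finally, the $u^{!}$-statement follows from the $u^{*}$-statement. Since $X$ is regular and $\sigma,\eta$ are fields, $X_{\eta}$ is smooth over $\eta$ and $D$, $A$ are smooth over $\sigma$, all of dimension $d':=\dim X_{\eta}$, so $\mathbb{D}_{X_{\eta}}\one=\one(d')[2d']$ and $\mathbb{D}_{D}\one=\mathbb{D}_{A}\one=\one(d')[2d']$. Using that $\Psi_{f}$ commutes with Verdier duality (up to the standard Tate twist and shift), $\mathbb{D}_{X_{\sigma}}\Psi_{f}\one\simeq(\Psi_{f}\one)(d')[2d']$; hence, with $u^{!}\mathbb{D}=\mathbb{D}u^{*}$ and $\mathbb{D}v_{*}=v_{!}\mathbb{D}$,
\[
u^{!}\Psi_{f}\one\simeq\mathbb{D}_{D}\,u^{*}\,\mathbb{D}_{X_{\sigma}}\Psi_{f}\one\simeq\mathbb{D}_{D}\big((u^{*}\Psi_{f}\one)(d')[2d']\big)\simeq\mathbb{D}_{D}\big((v_{*}\one_{A})(d')[2d']\big)\simeq v_{!}\one_{A}=h^{c}_{D}(A),
\]
the last step because $\mathbb{D}_{D}v_{*}\one_{A}\simeq v_{!}\mathbb{D}_{A}\one_{A}\simeq v_{!}(\one_{A}(d')[2d'])$, so the twists cancel. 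Alternatively, one may run the argument of the first two parts verbatim with upper-shriek in place of upper-star, which again reduces the $u^{!}$-statement to the same local model computation.
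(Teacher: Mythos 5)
Your proposal is correct and follows essentially the same route as the paper: the formal part (computing $v^*u^*\Psi_f\one\simeq\one_A$ from the smooth-pullback axiom applied to $X\setminus\cup_{j\neq i}D_j$ and from $\Psi_{\id}\one\simeq\one$) is identical, and the non-formal heart --- that the unit $u^*\Psi_f\one\to v_*v^*u^*\Psi_f\one$ is an isomorphism --- is in both cases delegated to Ayoub's Th\'eor\`eme 3.3.44, which you correctly identify as the local semistable computation rather than something the specialization-system axioms alone provide. For the $u^!$-statement the paper simply invokes the dual identity of that same theorem (your own stated fallback); your Verdier-duality detour would additionally require $\Psi_f$'s compatibility with duality and some care in $\SH$, where dualizing objects are Thom twists of tangent bundles rather than Tate twists, so the fallback is the cleaner choice.
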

	
	\begin{proof}
		First, we use a theorem by Ayoub which tells us how to compute the nearby cycles on a stratum of the special fiber by restricting to the smooth locus. In our notation, composing $u^* \Psi_f f_\eta ^*$ with the unit map  $id \rightarrow v_* v^* $ of the adjunction, induces {a natural isomorphism} (\cite[Th\'eor\`eme 3.3.44]{Ay07a})
		\[
			u^* \Psi_f f_\eta^*
			\xrightarrow{\sim} v_{*} v^* u^* \Psi_f f_\eta^* 
			.
		\] 
Applying the above isomorphism on $\one_\eta$, and get
	\[
		u^* \Psi_f \one
	\simeq v_{*} v^* u^* \Psi_f \one
	\]
		Now, let  $w\colon A \to \sigma$ be the structure morphism. Use the compatibility with smooth pullback
	(Property ~\ref{property}),
	first to the open immersion
	$X \setminus \cup_{j\neq i} D_j\hookrightarrow X$,
	then to the smooth morphism
	$X \setminus \cup_{j\neq i} D_j \to S$,
	and using the second Property~\ref{property} on $\Psi_{\id} \one$,
	we get
	\[
	v^* u^* \Psi_f \one \simeq w^* \Psi_{\id} \one \simeq \one_A
	.
	\]
	Combining the two
	we obtain the first equivalence.
	The second equivalence is handled similarly, with the dual identity in \cite[Th\'eor\`eme 3.3.44]{Ay07a}.
	\end{proof}
	
	\subsection{Homogeneous singularities}
	
	\begin{Def}\label{def:homogeneoussingularity}
		Let $f\colon X \to S$ be a morphism satisfying Assumption~\ref{assumption}, with base field $k$ and parameter $t$ (so that either $S= \Spec k[t]$, or $S = \Spec \OO$ and $t$ is a fixed uniformizer in the dvr $\OO$).
		Let $F \in k[T_0,...,T_n]$ be a homogeneous polynomial of degree $r$,
		with $(r,\operatorname{char} k)=1$.
		We say that a singular point $p \in X_\sigma$ is
		\emph{a homogeneous singularity defined by $F$,}
		if
		the hypersurface $V(F) \subset \P_k^n$ is smooth,
		and we have
		\[ f^*(t) = F(x_0,...,x_n) \]
		in the local ring $\OO_{X,p}$ modulo $m^{r+1}$,
		where $(x_0,...,x_n)$ is a regular sequence
		generating the maximal ideal $m \subset \OO_{X,p}$ (local \'etale coordinates).
	\end{Def}
	
	Let $F \in k[To,...,T_n]$ be a homogeneous polynomial of degree $r$. We define the following embedding of schemes,
	\begin{align} \label{clopimmersion}
	C:= \{T_{n+1} = 0\} \xhookrightarrow{i} D:=V_{\P^{n+1}}(F-T_{n+1}^r) \xhookleftarrow{j} A : = \{T_{n+1} \neq 0\}.
	\end{align}
	
	We claim that all the data about the nearby cycles spectrum at a homogeneous singularity $p$, defined by a polynomial $F$ (including the monodromy operator), can be deduced from this embedding of schemes. Notice that by the localization property we have the following natural maps:
	\begin{align} \label{eq:alpha}
		 \alpha : j_* \one_A \to i_* i^! \one_D [1] \\
		 \beta : i_* \one_C \to j_! \one_A [1] . \label{eq:beta}
	\end{align}
	Those two will appear later when we describe the monodromy.
	 To begin, we make the following claim which computes the nearby cycles spectrum restricted to the singular point.
	
	\begin{Prop}[{\cite[Theorem~4.3]{Az}}, {\cite[Proposition~3.2]{AJ}}] \label{prop:nearbypoint}
				Let $f\colon X \to S$ be 
		a flat, quasi-projective morphism, with a homogeneous singularity $o \colon p \hookrightarrow X_\sigma$ defined by a polynomial $F(T_0,...,T_n)$ of degree $r$, and let $A: = V(F-T_{n+1}^r) \setminus V(T_{n+1}) \subset \P^{n+1} $.
		We have natural isomorphisms
		\begin{align}
		o^*\Psi_f\one \simeq h(A),  \\   o^! \Psi_f \one\simeq h_c(A) .
	\end{align}
	\end{Prop}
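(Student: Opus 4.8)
The strategy is to reduce the local computation at the homogeneous singularity to the semistable situation already prepared in the previous subsection, and then apply Proposition~\ref{cor:computability}. First I would produce a semistable reduction of $f$ adapted to the point $p$. Because $p$ is a homogeneous singularity defined by $F$ of degree $r$, after base change $S' = S[t^{1/r}]$ and blowing up (the center being the point $p$, or rather the fiber over it), the local equation $f^*(t) = F(x_0,\dots,x_n)$ becomes, in the chart of the blowup, of the form $s^r = F$ with $s = t^{1/r}$, and one checks that the strict transform together with the exceptional divisor form a simple normal crossings configuration whose relevant component is exactly $D = V_{\P^{n+1}}(F - T_{n+1}^r)$ (with $T_{n+1}$ playing the role of the new coordinate $s$), smooth precisely because $V(F)\subset\P^n_k$ is smooth and $(r,\Char k)=1$. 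This is the content of the semistable reduction construction referenced as the common input to both main theorems; I would cite it and record that $g\colon Y\to S'$ is semistable with $D$ a component of $Y_\sigma$ lying over $p$.

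Next I would apply Proposition~\ref{prop:SStableRed} to rewrite $\Psi_f\one \simeq (r\circ\pi)_{\sigma*}\Psi_g\one$, and then pull back along $o\colon p\hookrightarrow X_\sigma$. The key point is that the fiber of $r\circ\pi$ over $p$ is (an open piece of) the exceptional divisor, and matching it up with the stratum $A = D\setminus V(T_{n+1})$ of the semistable special fiber $Y_\sigma$: on $A$ the other components of $Y_\sigma$ (the strict transform, meeting $D$ along $V(T_{n+1})$) are removed, so $A$ is exactly the open stratum $D_i\setminus\bigcup_{j\ne i}D_j$ in the notation of Proposition~\ref{cor:computability}. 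Base change for $o^*$ against the proper pushforward $(r\circ\pi)_{\sigma*}$ turns $o^*\Psi_f\one$ into the (derived) global sections along the fiber, i.e. into $\Gamma$ of $u^*\Psi_g\one$ restricted to that fiber, which by Proposition~\ref{cor:computability} is $v_*\one_A$. Taking global sections (pushforward to the base $\sigma$) then yields $h(A)$ for the $*$-version; the $!$-version is obtained identically from the dual statement $u^!\Psi_g\one\simeq v_!\one_A$ together with proper base change, giving $h_c(A)$.

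The main obstacle I expect is bookkeeping the base change precisely: one must verify that $o^*$ commutes past $(r\circ\pi)_{\sigma*}$ in the required way. Since $r\circ\pi$ is proper, proper base change in $\SH(-)$ applies, and the base change of $r\circ\pi$ along $o\colon p\hookrightarrow X_\sigma$ is the map from the fiber $E_p := (r\circ\pi)_\sigma^{-1}(p)$ to $\Spec$ of the residue field; so $o^*(r\circ\pi)_{\sigma*}\Psi_g\one \simeq (E_p\to\sigma)_*\,(E_p\hookrightarrow Y_\sigma)^*\Psi_g\one$. The remaining work is to identify $E_p$ with $A$ scheme-theoretically (this is exactly where homogeneity of degree $r$ and the normalization after the $r$-th root base change are used) and to see that $(E_p\hookrightarrow Y_\sigma)^* = v^* u^*$ up to this identification, so that Proposition~\ref{cor:computability} applies verbatim. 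Once the geometry is pinned down, the identity $o^*\Psi_f\one \simeq (A\to\sigma)_* v^* u^* \Psi_g\one \simeq (A\to\sigma)_* v^*(v_*\one_A) \simeq (A\to\sigma)_*\one_A = h(A)$ falls out formally, and likewise $o^!\Psi_f\one\simeq h_c(A)$ from the dual chain.
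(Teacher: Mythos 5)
Your strategy---blow up at $p$, adjoin an $r$-th root of $t$, normalize to obtain a semistable $g$, then combine Proposition~\ref{prop:SStableRed}, proper base change, and Proposition~\ref{cor:computability}---is exactly the paper's proof, and your second paragraph carries it out correctly. However, the "remaining work" you identify in the last paragraph is based on a false claim: the fiber $E_p=(r\circ\pi)_\sigma^{-1}(p)$ is the \emph{whole} component $D\simeq V_{\P^{n+1}}(F-T_{n+1}^r)$ (the preimage of the exceptional divisor under the normalization), not the open stratum $A$. It cannot be $A$: the fiber of a proper morphism over a point is proper, whereas $A$ is the complement of an ample divisor in $D$. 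Proper base change therefore gives $o^*(r\circ\pi)_{\sigma*}\simeq w_*u^*$ with $u\colon D\hookrightarrow Y_\sigma$ the closed immersion and $w\colon D\to p$ proper; one then needs the nontrivial input of Proposition~\ref{cor:computability} (Ayoub's extension-from-the-smooth-locus theorem) to compute $u^*\Psi_g\one\simeq v_*\one_A$ on all of $D$, and finally $w_*v_*\one_A=h(A)$ since $w\circ v$ is the structure map of $A$. If $E_p$ really were $A$, that theorem would be unnecessary---which it is not. The $!$-case goes through as you say, using $(r\circ\pi)_{\sigma*}=(r\circ\pi)_{\sigma!}$, base change for $o^!$ against $!$-pushforward, and $u^!\Psi_g\one\simeq v_!\one_A$.
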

	
	\begin{sketch}
		First we make the following construction:
			\begin{enumerate}
			\item Blow up $f \colon X \to S$ at the singular point \(p\), obtaining $\Bl_p(X) \to S$ .
			\item Pass to the base change \( S': = S[t^{1/r}] \to S \), $\Bl_p(X)_r \to S'$.
			\item Normalize the result  to obtain a morphism $g \colon Y \to S'$.
				\end{enumerate}		
			Computing the blowup by the standard covering shows that $g$ is a semistable reduction for $f$. Moreover, the special fiber $Y_\sigma$, is the union of two smooth divisors $Y_\sigma := D  \cup D'$, where $D$ is the preimage of the exceptional divisor in the blowup, and $D'$ the preimage of the strict transform. In addition, define $C := D \cap D'$. Then the close immersion $u \colon C \hookrightarrow D$ is isomorphic to the close immersion $i$ in Equation~\eqref{clopimmersion} \cite[Theorem~4.3]{Az}. 
			Now by the Proposition~\ref{prop:SStableRed}, we have 
			\[ o^* \Psi_f\one \simeq o^* (r\circ \pi)_{\sigma*}\circ \Psi_{g} \one .\]
			Let $w$ be the restriction of $(r\circ \pi)_{\sigma}$ to $D$. Then by proper base change, $o^* (r\circ \pi)_{\sigma*} =w_* u^*$, and we get $o^* \Psi_f\one \simeq w_* u^*\Psi_{g} \one$. Finally,
			since $g$ is semistable, and setting $A: = D \setminus C \simeq  V(F-T_{n+1}^r) \setminus V(T_{n+1})$, we have by Proposition~\ref{cor:computability},
			\[u^* \Psi_g \one \simeq h_D(A) ,\]
			and so we get the first statement in the proposition, the second one is obtained in a similar way.
	\end{sketch}
		
	\begin{Prop}[{\cite[Proposition~3.7]{AJ}} for $\DM(-)$] \label{prop: triangle}
		Let $g: Y \to S$ be a semistable morphism, with $Y_\sigma = D \cup D'$, $D$, $D'$ smooth, $C: = D \cap D'$, and $u \colon D \hookrightarrow Y_\sigma$ the closed immersion.	We have the following fiber sequence in $\SH(D)$:
	\begin{align} \label{eq:triangle_a}
	\one_{D}\to  u^* \Psi_g \one\xrightarrow{} i_* \Sigma^{-\mathcal{N}_{i}} \one_{C} [1] ,
	\end{align}
	where $\mathcal{N}_i$ is the normal bundle of the closed immersion $i \colon C \hookrightarrow D$.
	\end{Prop}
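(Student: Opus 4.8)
The plan is to deduce the fiber sequence from Ayoub's computation of nearby cycles on a stratum, Proposition~\ref{cor:computability}, combined with the localization (Gysin) triangle and homotopy purity in $\SH$.

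First I would identify the middle term. Since the special fiber $Y_\sigma = D \cup D'$ has only the two components $D$ and $D'$, the open stratum of $D$ appearing in Proposition~\ref{cor:computability} is $A = D \setminus (D \cap D') = D \setminus C$. Writing $v \colon A \hookrightarrow D$ for the open immersion complementary to $i \colon C \hookrightarrow D$, Proposition~\ref{cor:computability} applied to the semistable morphism $g$ gives a natural equivalence $u^* \Psi_g \one \simeq v_* \one_A$, under which the unit map $\one_D \to v_* v^* \one_D = v_* \one_A$ is identified with the first map of the asserted sequence.

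Next I would feed $\one_D \in \SH(D)$ into the localization recollement for the closed--open decomposition $C \xrightarrow{i} D \xleftarrow{v} A$. Using $i_! = i_*$ for the closed immersion $i$, this produces the fiber sequence
\[
i_* i^! \one_D \longrightarrow \one_D \longrightarrow v_* \one_A \longrightarrow i_* i^! \one_D[1].
\]
Because $g$ is semistable, $D$ and $D'$ are smooth and meet transversally along $C$, so $i$ is a regular closed immersion of codimension one between smooth schemes, with a line-bundle normal bundle $\mathcal{N}_i$. Homotopy purity in $\SH$ then gives the Thom identification $i^! \one_D \simeq \Sigma^{-\mathcal{N}_i} \one_C$. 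Substituting this into the triangle and rotating once yields $\one_D \to v_* \one_A \to i_* \Sigma^{-\mathcal{N}_i} \one_C[1]$, and composing with the equivalence of Proposition~\ref{cor:computability} gives the claimed fiber sequence in $\SH(D)$.

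The only genuinely nontrivial ingredient is Proposition~\ref{cor:computability}, which I am entitled to assume; everything else is formal six-functor manipulation. The step that needs care is purely bookkeeping: I must check that purity for the codimension-one immersion $i$ contributes exactly the Thom desuspension $\Sigma^{-\mathcal{N}_i}$ with no extra simplicial shift, so that the single $[1]$ in the statement comes entirely from rotating the recollement triangle, and — more pedantically — that the chain of identifications is natural enough to splice into an honest fiber sequence rather than a mere isomorphism of the underlying objects. In the oriented setting $\DM(-)$ one can further rewrite $\Sigma^{-\mathcal{N}_i}\one_C \simeq \one_C(-1)[-2]$, but in $\SH(-)$ the Thom twist must be kept.
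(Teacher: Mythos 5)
Your argument is correct and is essentially the paper's own proof: apply the localization triangle $i_!i^!\to\id\to j_*j^*$ to $\one_D$, use purity for the regular codimension-one immersion $i$ to identify $i^!\one_D\simeq\Sigma^{-\mathcal{N}_i}\one_C$, rotate, and identify $v_*\one_A$ with $u^*\Psi_g\one$ via the semistable computation of nearby cycles on a stratum. The only (cosmetic) difference is that you correctly invoke Proposition~\ref{cor:computability} for that last identification, whereas the paper's text cites Proposition~\ref{prop:nearbypoint}, which is the pointwise version of the same statement.
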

	
	\begin{proof}
	Consider the localization sequence in $\SH(D)$ with $C \xhookrightarrow{i} D \xhookleftarrow{j} A$:
	\[
	 i_! i^! \to \id \to j_*j^* 
	. \]
	Applying this to $\one_D$, and using purity along closed immersions (\cite[Proposition 5.7]{Hoy}),  which gives $i^! \one_{D} \simeq \Sigma^{-\mathcal{N}_{i}} \one_{C}$, we get the sequence
	\[
	i_* \Sigma^{- \mathcal{N}_{i}} \one_C \to \one_{D} \to j_* \one_A .
	\]
	Now using the result of Prop~\ref{prop:nearbypoint} and shifting, we get the first fiber sequence.
	\end{proof}
	
	\begin{Rem}
		While the twist $\Sigma^{-\mathcal{N}_{i}}$ in general depends on the embedding $i$, its realization in cohomology theories that come with some type of orientation data, depends on less info. For example, if we look at $H \Z$, Chow group cohomology (or equivalently if we work in $\DM(-)$), this depends only on the codimension of $C$ in $D$, which is $1$, and then $\Sigma^{-\mathcal{N}_{i}}(-) = (-)\otimes (-1)[-2]$, 
	\end{Rem}
	
		We have a natural map $K_0(\Var_k) \to K_0 (\SH(k))$, defined by $[p \colon X \to k] \mapsto [p_! \one_X] $, so we identify $[X] = [p_! \one_X] \in K_0(\SH(k))$.
	
	\begin{Cor}[{\cite[Corollary~4.4]{Az}}] \label{cor:K0id}
			Let $f\colon X \to S$ be 
		a morphism satisfying Assumption~\ref{assumption}, with a homogeneous singularity $o \colon p \hookrightarrow X_\sigma$ defined by a polynomial $F$, Def.~\ref{def:homogeneoussingularity}. We have the following identity in $K_0(\SH(k(p))$, 
		\[ [\Psi_f|_p] = [D] - [\Aa^1]\cdot[C],
		\]
		with the notation of \eqref{clopimmersion}.
	\end{Cor}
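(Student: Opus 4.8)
I plan to deduce the formula from Proposition~\ref{prop:nearbypoint} together with the localization triangle for the pair $C \hookrightarrow D$, passing to $K_0$ at the end.

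First I would rewrite the left-hand side. Reading $[\Psi_f|_p]$ as the class of $o^*\Psi_f\one$, Proposition~\ref{prop:nearbypoint} (applicable after possibly shrinking $X$ around $p$, since nearby cycles at $p$ are local there) gives $o^*\Psi_f\one \simeq h(A)$, where $A = V(F-T_{n+1}^r)\setminus V(T_{n+1}) = D\setminus C$ with $D,C$ (base-changed to $k(p)$) as in \eqref{clopimmersion}; concretely $h(A) = w_*j_*\one_A$, where $j\colon A \hookrightarrow D$ is the open immersion and $w\colon D \to \Spec k(p)$ the structure morphism, which is proper because $D$ is projective. So it suffices to compute $[h(A)] \in K_0(\SH(k(p)))$.

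Next I would apply $K_0$ to the localization triangle. For $i\colon C\hookrightarrow D$ closed with open complement $j$, the triangle $i_*i^!\one_D \to \one_D \to j_*\one_A$ and purity \cite[Proposition~5.7]{Hoy}, giving $i^!\one_D \simeq \Sigma^{-\mathcal{N}_i}\one_C$, yield $i_*\Sigma^{-\mathcal{N}_i}\one_C \to \one_D \to j_*\one_A$ in $\SH(D)$ — this is exactly Proposition~\ref{prop: triangle}. Pushing forward along $w$ (proper, so $w_* = w_!$) and taking classes,
\[
[h(A)] = [w_*\one_D] - [w_*i_*\Sigma^{-\mathcal{N}_i}\one_C].
\]
Here $[w_*\one_D] = [w_!\one_D] = [D]$ since $D$ is smooth projective. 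For the remaining term, set $q := w\circ i\colon C \to \Spec k(p)$, so $w_*i_*\Sigma^{-\mathcal{N}_i}\one_C = q_!\Sigma^{-\mathcal{N}_i}\one_C$. As $C$ is a divisor in $D$, $\mathcal{N}_i$ is a line bundle, and the Thom isomorphism in $K_0$ — the class of the Thom spectrum of a line bundle equals that of the corresponding Tate twist, which for $K_0$-classes holds unconditionally; cf. the remark following Proposition~\ref{prop: triangle}, where $\Sigma^{-\mathcal{N}_i}(-) = (-)\otimes(-1)[-2]$ — gives $[\Sigma^{-\mathcal{N}_i}\one_C] = [\one_C(-1)[-2]]$ in $K_0(\SH(C))$. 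Since $\one_C(-1)[-2] = q^*(\one(-1)[-2])$, the projection formula gives $[q_!\Sigma^{-\mathcal{N}_i}\one_C] = [q_!\one_C]\cdot[\one(-1)[-2]] = [C]\cdot[\Aa^1]$, using $[q_!\one_C] = [C]$ and the identity $[\one(-1)[-2]] = [\Aa^1]$ (checked from $\P^1 = \Aa^1 \sqcup \{\infty\}$ and $h(\P^1) = \one \oplus \one(-1)[-2]$). Combining the two computations gives $[\Psi_f|_p] = [h(A)] = [D] - [\Aa^1]\cdot[C]$.

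Every step is formal once Propositions~\ref{prop:nearbypoint} and \ref{prop: triangle} are available; the only point needing care is the evaluation of the Thom-twisted term, i.e.\ that $\Sigma^{-\mathcal{N}_i}$ contributes precisely one factor of $[\Aa^1]$. The main pitfall is the Tate-twist bookkeeping: one must use the normalization $[\Aa^1] = [\one(-1)[-2]]$ (equivalently, that $K_0(\Var) \to K_0(\SH)$ is built from $!$-pushforward) so that the negative twist $\Sigma^{-\mathcal{N}_i}$ coming from purity produces $[\Aa^1]^{+1}$ and not its inverse.
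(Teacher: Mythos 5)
Your argument is correct and is essentially the paper's own proof: both identify $o^*\Psi_f\one$ with $h(A)=w_*j_*\one_A$ via the semistable reduction of Proposition~\ref{prop:nearbypoint}, push the localization triangle of Proposition~\ref{prop: triangle} forward along the proper map $w$, and evaluate the Thom-twisted term as $[\Aa^1]\cdot[C]$. The one imprecision is your justification that $[\Sigma^{-\mathcal{N}_i}\one_C]=[\one_C(-1)[-2]]$ in $K_0(\SH(C))$: the remark after Proposition~\ref{prop: triangle} concerns oriented theories such as $\DM$ and does not apply in $\SH$ itself, whereas the correct (and easy) argument --- the one the paper uses --- is that $\mathcal{N}_i$ is Zariski-locally trivial, combined with Mayer--Vietoris in $K_0$.
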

	
	\begin{proof}
		First, by the same construction as in the proof of Proposition~\ref{prop:nearbypoint}, we get $g \colon Y \to S'$, with $Y_\sigma = D \cup D'$, and the close immersion $u \colon C:= D \cap D' \hookrightarrow D$ isomorphic to the close immersion $i$ in Equation~\eqref{clopimmersion}, and $D$ the preimage of $p$ under the construction. Letting $w \colon D \to \Spec k(p)$, we have $\Psi_f \one |_p = o^* \Psi_f \one \simeq w_* u^* \Psi_g \one $.
		Now, we pushing forward Equation~\eqref{eq:triangle_a} by $w$, and using the fact that $\mathcal{N}_i$ is Zariski locally trivial, i.e., it is locally isomorphic to $\Aa^1_C$, and we have Mayer-Vietoris property in $K_0(-)$ (see \cite[Proposition~2.12]{Az} for more details), we get the result.
	\end{proof}
	\begin{Rem}
		Corollary~\ref{cor:K0id} follows immediately from a more general result by Ayoub-Ivorra-Sebag, \cite[Theorem~8.6]{AIS}. It states that the class of $\Psi_f \one$ in $K_0(\SH_c(X_\sigma))$, is equal to the motivic Milnor fiber in the sense of Denef-Loeser, which is considered in the theory of motivic integration \cite{DL}. A proof of the formula in the special case of no triple intersections, with a construction similar to the one in Proposition~\ref{prop:nearbypoint} appears in \cite{Az}.  
	\end{Rem}
	
	\subsection{Quasi-homogeneous singularities}
		We also deal with a more general class of singularities, where we can put different weights on the variables. Here there are some additional assumptions detailed below.
	
	\begin{Def}\label{def:qhomogeneoussingularity}
		Let $\underline{a}=(a_1,...,a_n)$ a vector of natural numbers and let $\P(\underline{a})$ be the weighted homogeneous space $\operatorname{Proj} k[T_0,...,T_n]$, with the ring graded by letting $T_i$ have degree $a_i$. Let $F \in k[T_0,...,T_n]$ be an $\underline{a}$-weighted homogeneous polynomial of degree $r$. It defines a hypersurface $V(F)$ in the weighted homogeneous space $\P(\underline{a})$. We require that the weights $a_i$ are pairwise relatively prime, each $a_i$ divides $r$, and $r$ is prime to the exponential characteristic of $k$. Let $G(y_0,...,y_n): =F(y_0^{a_0},..,y_n^{a_n})$ and assume that both $V(G) \subset \P_k^n$, and $V(F) \subset \P(\underline{a})$ are smooth. Furthermore, letting $v_i\in   \P(\underline{a})$ be the point with the $i$-th homogeneous coordinate 1, and all other coordinates 0,  we require that $F(v_i)\neq0$ if $a_i>1$ (this last condition is superfluous in the case $n > 1$, see \cite[Remark 5.4]{Az}). \\
		Let $f\colon X \to \Spec k[t]$ be a flat, quasi-projective morphism.	We say that a  singular point $p \in X_\sigma$ is \emph{an $\underline{a}$-weighted homogeneous singularity defined by $F$}, if in the local ring $\OO_{X,p}$, we can write
		\[ f^*(t) = F(x_0,...,x_n) + h,\]
		where $\underline{x}=(x_0,...,x_n)$ is a regular sequence generating the maximal ideal $m \subset \OO_{X,p}$, and $h \in m \cdot m_{\underline{a}}^{(r)}$, where $m_{\underline{a}}^{(r)}$ is the ideal generated by monomials in $\underline{x}$ of $\underline{a}$-weighted degree $r$. For more details see \cite[Definition 5.1, Definition 5.3]{Az}.
	\end{Def}
	
	\begin{Prop}\label{proposition:qhomogeneousssred}
		Let $f\colon X \to S$ be
		a flat, quasi-projective morphism.
		Suppose that $X_\sigma$ has a single quasi-homogeneous singularity $o$
		defined by $F \in k[T_0, \dotsc, T_n]$ of degree $r$.
		Then $f$ is admits semistable reduction
		$g \colon Y \to \Aa^l$,
		where $g = f_r \circ \pi$,
		$f_r \colon X_r \to \Aa^1$
		is $r$-th power base change of $f$,
		and $\pi \colon Y \to X_r$ is proper.
		We have $Y_\sigma = D \cup D'$, with
		$D \simeq V_{\P(\underline{a},1)}(F-T_{n+1}^r)$,
		and
		$C : = D \cap D' \simeq V_{\P(\underline{a})}(F)$.
	\end{Prop}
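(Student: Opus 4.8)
The plan is to carry out the three-step recipe used in the sketch of Proposition~\ref{prop:nearbypoint} — blow up at the singular point, base change by $t^{1/r}$, normalize — but with the ordinary blowup replaced by the \emph{weighted blowup} $\varpi\colon\Bl_{\underline a}(X)\to X$ at $o$ with weight vector $\underline a$, taken with respect to the regular system of parameters $\underline x=(x_0,\dots,x_n)$. Since the $a_i$ are pairwise coprime, the exceptional divisor is the well-formed weighted projective space $\P(\underline a)$, and $\Bl_{\underline a}(X)$ is covered by $n+1$ standard charts $U_i=\Aa^{n+1}_{\xi,u}/\mu_{a_i}$, where $\mu_{a_i}$ acts by $\xi\mapsto\zeta\xi$ and $u_j\mapsto\zeta^{-a_j}u_j$ for $j\neq i$, and $\varpi$ is given by $x_i=\xi^{a_i}$, $x_j=\xi^{a_j}u_j$. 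Pairwise coprimality makes this action free away from the origin of $\Aa^{n+1}_{\xi,u}$, so each $U_i$ has at most one quotient singularity, lying on the exceptional divisor $\{\xi=0\}$.

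In a chart I would first compute the pullback of the parameter. Since $F$ is $\underline a$-weighted homogeneous of degree $r$, each monomial of $\underline a$-weighted degree $r$ pulls back to $\xi^{r}$ times a monomial in $u$, so $F(\underline x)$ pulls back to $\xi^{r}\bar F_i(u)$ with $\bar F_i(u):=F(u_0,\dots,u_{i-1},1,u_{i+1},\dots,u_n)$, while the error term $h\in m\cdot m^{(r)}_{\underline a}$ contributes only $\xi^{r+1}\cdot(\text{regular})$; hence
\[
f^{*}(t)\;=\;\xi^{r}\,\widetilde F_i,\qquad \widetilde F_i:=\bar F_i(u)+\xi\cdot(\text{regular}),
\]
and $\xi^{r}$, hence $\widetilde F_i$, is $\mu_{a_i}$-invariant precisely because $a_i\mid r$. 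Then I would base change by $S'=S[t^{1/r}]$, with $s^{r}=t$, and normalize: on the $\mu_{a_i}$-cover this adjoins $\tau:=s/\xi$ satisfying $\tau^{r}=\widetilde F_i$, producing $Z_i=\Spec k[\xi,u][\tau]/(\tau^{r}-\widetilde F_i)$, a separable cyclic $r$-cover of affine space (here $(r,\Char k)=1$ is used) branched along the smooth divisor $V(\widetilde F_i)$ (here the smoothness of $V(F)\subset\P(\underline a)$ and of $V(G)\subset\P^{n}_k$, together with $F(v_i)\neq 0$ for $a_i>1$, is used), hence $Z_i$ is regular. The $\mu_{a_i}$-action extends to $Z_i$ by $\tau\mapsto\zeta^{-1}\tau$, so $s=\xi\tau$ is invariant and $Z_i/\mu_{a_i}$ is the local normalization; this extended action is moreover free, since the only possible non-free locus lies over the origin of $\Aa^{n+1}_{\xi,u}$, where $\widetilde F_i$ specializes to $\bar F_i(0)=F(v_i)$, which is nonzero when $a_i>1$, so $\tau\neq 0$ there. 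Consequently the normalization $Y$ is regular; in the chart $g^{*}(s)=\xi\tau$, so $Y_\sigma=\{\xi=0\}\cup\{\tau=0\}=:D\cup D'$ is reduced with $D,D'$ regular meeting transversally along $C:=\{\xi=\tau=0\}$, i.e.\ $g:=f_r\circ\pi\colon Y\to\Aa^{1}$ is semistable.

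Then I would glue the charts to get the global description. The relations $\tau^{r}=\bar F_i(u)$ on $\{\xi=0\}$ patch to $T_{n+1}^{r}=F$ in $\P(\underline a,1)=\Proj k[T_0,\dots,T_n,T_{n+1}]$ with $T_{n+1}$ of weight $1$, so $D\simeq V_{\P(\underline a,1)}(F-T_{n+1}^{r})$ — the weighted analogue of $D\simeq V_{\P^{n+1}}(F-T_{n+1}^{r})$ in the homogeneous case — while $C=D\cap\{T_{n+1}=0\}\simeq V_{\P(\underline a)}(F)$ and $D'$ is the strict transform of $X_\sigma$, meeting $D$ along $C$. Combined with Proposition~\ref{prop:SStableRed}, this exhibits the asserted semistable reduction.

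The main obstacle will be the chart-by-chart verification that this composite is regular with reduced simple normal crossing special fiber: a weighted blowup is a singular orbifold, and one must check separately that each cyclic cover $Z_i$ is smooth, that the residual $\mu_{a_i}$-quotient singularity is resolved after the $r$-th root base change, and that the charts glue to the stated global description of $D$, $D'$, $C$ — this is precisely where each of the numerical conditions of Definition~\ref{def:qhomogeneoussingularity} (pairwise coprimality of the $a_i$, $a_i\mid r$, $(r,\Char k)=1$) and the non-degeneracy conditions on $V(F)$ and $V(G)$ enter. This bookkeeping is carried out in \cite[\S5]{Az}, whose argument I would follow; the homogeneous case (all $a_i=1$), in which $\Bl_{\underline a}$ is the ordinary blowup, is the prototype treated in the sketch of Proposition~\ref{prop:nearbypoint}.
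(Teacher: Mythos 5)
Your proposal is correct and follows essentially the same route as the paper: the paper's own proof simply cites \cite[Thm.~5.7]{Az}, noting that the construction of Proposition~\ref{prop:nearbypoint} goes through with the ordinary blowup replaced by a weighted blowup, and your chart-by-chart description (weighted blowup with weights $\underline a$, base change by $t^{1/r}$, normalization via the cyclic cover $\tau^r=\widetilde F_i$, with the freeness of the residual $\mu_{a_i}$-action secured by $F(v_i)\neq 0$) is precisely the argument of \cite[\S5]{Az} that is being invoked. You correctly identify where each hypothesis of Definition~\ref{def:qhomogeneoussingularity} enters, so no gap to report.
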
	
	
	\begin{proof}
		This follows from \cite[Thm.~5.7]{Az}.
		While the construction of $g$ is more involved than in
		Proposition~\ref{prop:nearbypoint}
		(replacing the blowup by a certain construction of a weighted blowup),
		it still satisfies the statement of the proposition.
	\end{proof}
	
	\begin{Rem}
		With similar complementing closed and open immersions to \eqref{clopimmersion}, only in weighted projective space, more precisely:
		\[
			C:= \{T_{n+1} = 0\} \xhookrightarrow{i} D:=V_{\P(\underline{a}, 1)}(F-T_{n+1}^r) \xhookleftarrow{j} A : = \{T_{n+1} \neq 0\} ,
			\]
			we get quasi-homogeneous generalizations for most of the results in these notes: Theorem~\ref{th:Az}, Theorem~\ref{th:LPLS}, Theorem~\ref{th:chiformula} and Theorem~\ref{th:abstractthm}.
	\end{Rem}
	
	\section{The quadratic Euler characteristic of nearby cycles} \label{sec:3}

	Here we have a motivic version for the Euler characteristic of the Milnor fiber, and Milnor's Formula. The results in this section are due to the work of Levine, Srinivas and Pepin Lehalleur \cite{Le20b}, and the work of the author \cite{Az}.
	
	\subsection{Quadratic invariants}
	
	We present here quadratic refinements to the invariants appearing in the Milnor and Deligne--Milnor formula \eqref{eq:Milnor}, \eqref{eq:DeligneFormula}. Quadratic refinements take value in the \emph{Grothendieck--Witt ring of the base field}, $\GW(k)$, and refine classical invariants in the ring of integers $\mathbb{Z}$. We work over a base field $k$ with characteristic different than $2$, and denote by $ \<a\> \in \GW(k) $ the element corresponding to the quadratic form $x \mapsto ax^2$. 
	\paragraph{The quadratic Euler characteristic}
	Let $p \colon S \to k$ be a morphism, and  $\alpha \in \SH(S)$ be a dualizable motivic spectrum in the symmetric monoidal category $(\SH(S), \otimes, \one_k)$. Assume here $\Char k = 0$ for simplicity, otherwise we may have to invert the characteristic. The \emph{compactly supported quadratic Euler characteristic}, is defined, invoking the notion of trace in a symmetric monoidal category (\cite[Definition 2.2]{PS}), as
	\[ \chic(\alpha / p) : = \tr(\id_{p_!\alpha}) \in \End_{\SH(k)}(\one_k) \simeq \GW(k) . \]
	 For a scheme $f \colon X \to k$ we define $\chic(X/k) : = \chic(\one_X / f)$. This factors as a \emph{motivic measure},
	\[ \chic \colon K_0(\Var_k) \to \GW(k) ,\]
	for details see also \cite[\S~2]{Le20a}, \cite[\S~2]{Az}, \cite{BM}. If $k$ embeds in $\mathbb{C}$, taking rank recovers the topological Euler characteristic of $X(\C)$, similarly we can take \'etale realization and recover the $\ell$-adic Euler characteristic. Extending to all dualizable motivic spectra, by the additivity of traces along fiber sequences (\cite{May}), we get a ring homomorphism
	\[ \chic \colon K_0(\SH^c(k)) \to \GW(k). \]
	In particular, $\chic$ is well defined on our version of motivic Milnor fiber, i.e. the motivic nearby cycles spectrum, $\Psi_f \one$, and also for the motivic vanishing cycles spectrum $\Phi_f \one$, as well as their restriction to any closed subscheme of the special fiber.
	
	Let $f \colon X \to S = \Spec \OO$ where $\OO$ is a dvr with uniformizer $t$, $K$ is the function field and $k$ is the residue field. We have a ring homomorphism
	\[ \Sp_t \colon \GW(K) \to \GW(k) \]
	sending $\<t^nu\>$ to $\<\bar{u}\>$ ,where $u \in \OO^\times$.
	If $f$ is proper, we have \cite[Prop. 8.1]{Le20b},
	\begin{equation} \chic (\Psi_f \one) = \Sp_t \chic(X_\eta)  . \end{equation}
	This is a motivic version for the formula \eqref{eq:properetale}, and uses the smooth pullback in Property~\ref{property}. We can now define the following differences.
	 \begin{Def}
		Let $f \colon X \to S = \Spec \OO$, according to Assumption~\ref{assumption}.
		\begin{itemize}
			\item Let $p \in X_\sigma$ be a closed point. Define
			\[
			\Delta \chi(f)_p : =  \chic(\Psi_f \one |_p) - \<1\> .
			\]
			\item 
			For $f$ proper, define
			\[
			\Delta\chi(f) : = 	\Sp_t\chic(X_\eta)-\chic(X_\sigma) . \]
		\end{itemize} 
		
		\begin{Rem} \label{rem:deltasmooth}
			Note that if $f$ is smooth, $\chic (\Psi_f \one) = \chic (X_\sigma)$, and $\Delta \chi (f) = 0$. 
			\end{Rem}
		
	\end{Def}
	
	\paragraph{The quadratic Milnor number} We define a quadratic version to the Milnor number at a point. Given $f: X \to S$ and $p \in X_\sigma$ an isolated singularity, we may consider the section $df \colon X \to \Omega_{X/k}$, so that $p$ is an isolated zero of $df$. Then we can define the \emph{quadratic Milnor number} as the Euler class with respect to this section, taken in Hermitian K-theory, or equivalently in Chow-Witt theory. Below is the definition of the Euler class with respect to a section of a vector bundle.
	 \begin{Def} \label{LocalEulerClass}
		Fix a motivic ring spectrum $E \in \SH(S)$. Let $V \rightarrow X$ be a vector bundle of rank n, let $s: X \rightarrow V$ be a section, and $i: Z  =Z(s) \hookrightarrow X $ the zero locus of $s$. The \emph{local Euler class} of $(V,s)$, is the element $e(V,s) \in E_Z^{V^*}(X)$  defined by the composition
		\[ X/X \setminus Z \xrightarrow{s} V/V\setminus 0 \simeq \Sigma^{V^*} \one_k \rightarrow \Sigma^{V^*} E|_X \in \SH(X) . \]  
	\end{Def}
	\begin{Def}
	Let $f \colon X \to S$ be a morphism, with $p$ an isolated critical point. The \emph{quadratic Milnor number} $\mu_{f,p}$ is the local Euler class with respect to the vector bundle $V =  \Omega_{X/k}$ and the section given by $df$, (where the zero locus is the isolated point $p$). $E$ can be taken to be the spectrum representing Hermitian $K$-theory or Milnor-Witt sheaves. 
		\[
	\mu_{f,p}^q : = e^{\GW}_p(\Omega_{X/k}, df) \in \GW(k(p)).
	\] 
	\end{Def}
 This definition is a motivic version to Bloch's localized Chern class (\cite{Blo}), and could be used also if the critical locus were not isolated.
	For a more  explicit definition, $\mu_{f,p}^q$ is given by a canonical bilinear form $[B_{f,p}]$ on the Jacobian ring at $p$. 
	We have the \textit{Jacobian ring of $f$ at $p$}, $J(f,p)$, 
	\[
	J(f,p):=\OO_{X,p}/(\partial f/\partial s_1\ldots \partial f / \partial s_n) ,
	\] 
	where $(s_1,...,s_n)$ are local coordinates of $X$ at $p$. Now the form
	\[B_{f,p} : J(f,p) \times_{k(p)} J(f,p) \to k(p).\]
	 satisfies $ B_{f,p}(x,y) = 1$ if $xy = e_{f,p}$, where  $e_{f,p} \in J(f,p)$ is a special element in the Jacobian ring, called the {\em Scheja-Storch element}. This is the class of \emph{Eisenbud-Khimshiashvili-Levine} \cite{KW}. For more details, see \cite[Proposition~2.32, Theorem~7.6]{BW}, \cite[Corollary~3.3]{Le20a}, and \cite[\S~6.3]{Az}.

	If $k(p)$ embeds in $\C$, then by taking the rank of the corresponding quadratic form we recover the classical Milnor number, $\rk \mu^q_{f,p} = \dim J(f,p) = \mu_{f,p}$.
	
	\subsection{The formulas} 
	
	\begin{Th}[{\cite[Corollary~4.4]{Az}}]  \label{th:Az}
		Let $f: X  \to \Spec \OO $ be a morphism with Assumption~\ref{assumption}, and $p \in X_\sigma$ an homogeneous singularity of the special fibre $X_\sigma$ defined by a  $F\in k(p)[T_0,\ldots, T_n]$ of degree $r$, Def.~\ref{def:homogeneoussingularity}. Then, with the notation of \eqref{clopimmersion},
		\[
		\chic(\Psi_f\one|_p)= \chic(D)-
		\<-1\> \chic(A).
		\]
	\end{Th}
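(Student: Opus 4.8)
The plan is to obtain the identity by pushing the Grothendieck--group computation of Corollary~\ref{cor:K0id} through the motivic measure $\chic$. Since $\Psi_f\one|_p$ is a dualizable object of $\SH(k(p))$, that corollary gives $[\Psi_f\one|_p] = [D] - [\Aa^1]\cdot[C]$ in $K_0(\SH^c(k(p)))$. As traces are additive along fibre sequences and multiplicative for $\otimes$, the rule $\alpha\mapsto\chic(\alpha)$ factors through a ring homomorphism $K_0(\SH^c(k(p)))\to\GW(k(p))$, and applying it yields
\[
\chic(\Psi_f\one|_p) = \chic(D) - \chic(\Aa^1)\cdot\chic(C).
\]
It then remains only to evaluate the auxiliary class $\chic(\Aa^1)$. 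I would read this off from the scissor relation $[\P^1_{k(p)}] = [\Aa^1_{k(p)}] + [\Spec k(p)]$ together with $\chic(\P^1) = \<1\>+\<-1\>$, which itself follows from the stable splitting $\Sigma^\infty_+\P^1\simeq\one\oplus\one(1)[2]$ and the value $\chic(\one(1)[2]) = \<-1\>$; this gives $\chic(\Aa^1) = \<-1\>$, and substituting produces the asserted formula in the notation of~\eqref{clopimmersion}.

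As a consistency check one can also proceed directly from Proposition~\ref{prop:nearbypoint}: there $\Psi_f\one|_p\simeq h(A)$, and since $A$ is smooth of pure dimension $n$ over $k(p)$, the duality $h(A)\simeq\mathbb{D}\bigl(p_!\,\Sigma^{T_{A/k(p)}}\one_A\bigr)$ and the self-duality of the Euler characteristic identify $\chic(h(A))$ with $\chic\bigl(p_!\,\Sigma^{T_{A/k(p)}}\one_A\bigr) = \<-1\>^{\,n}\chic(A)$, the Thom twist by the rank-$n$ bundle $T_{A/k(p)}$ contributing the factor $\<-1\>^{\,n}$ (see below for why only this factor appears). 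This agrees with $\chic(D)-\<-1\>\chic(C)$: indeed $\chic(D)=\chic(A)+\chic(C)$, and the Euler characteristic of a smooth projective variety of odd dimension is an integral multiple of the hyperbolic class $\<1\>+\<-1\>$, so multiplication by $\<-1\>$ fixes $\chic(D)$ when $n$ is odd and fixes $\chic(C)$ when $n$ is even.

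I expect essentially all of the substantive work to be already invested in Corollary~\ref{cor:K0id}, which rests on the semistable--reduction model of Proposition~\ref{prop:nearbypoint} (realising the closed immersion $C\hookrightarrow D$ as $i$ in~\eqref{clopimmersion}) and on the fibre sequence of Proposition~\ref{prop: triangle}, whose last term $i_*\Sigma^{-\mathcal{N}_i}\one_C[1]$ carries a normal--bundle Thom twist. The single delicate point in passing to $\chic$ is to verify that this twist contributes exactly the factor $\<-1\>$ and no discriminant information: because $\mathcal{N}_i$ is only a line bundle and need not be globally trivial, one cannot invoke a global Thom isomorphism, and must instead use its Zariski--local triviality together with Mayer--Vietoris in $K_0(\SH^c(-))$ to reduce to the trivial--bundle case $\chic(\Sigma^{-\OO_C}\one_C) = \chic(\Aa^1)\,\chic(C) = \<-1\>\chic(C)$ --- exactly the reduction already carried out in the proof of Corollary~\ref{cor:K0id}. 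That reduction is the main obstacle in a self-contained treatment; the rest is formal bookkeeping with the measure. Taking ranks recovers the classical relation $\chi(\MF_{f,p}) = \chi(D)-\chi(C)$, which is Milnor's formula~\eqref{eq:Milnor} in this case.
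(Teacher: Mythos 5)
Your main argument is precisely the paper's proof: push the $K_0$-identity of Corollary~\ref{cor:K0id} through the motivic measure $\chic$ and evaluate $\chic(\Aa^1)=\<-1\>$, so the approach and all the supporting reductions you cite are the same. Note, however, that what this (and your own duality consistency check) actually yields is $\chic(\Psi_f\one|_p)=\chic(D)-\<-1\>\chic(C)$; the ``$A$'' in the displayed statement is a misprint for ``$C$'' (for an ordinary double point with $n=1$ the $A$-version would give $2\<-1\>$ instead of the correct $\<1\>-\<-1\>$), so you have derived the correct formula and should say so explicitly rather than asserting agreement with the statement as literally printed.
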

	
	\begin{proof}
		This follows from Corollary~\ref{cor:K0id}, as $\chic (\Aa^1) = \< -1 \>$ .
			\end{proof}
	
	\begin{Th}[Hypersurface quadratic conductor formula, {\cite[Theorem~5.5]{Le20b}}] \label{th:LPLS}
		Use notation as in Assumption~\ref{assumption}, with $\OO$ a dvr, and $t \in \OO$ a uniformizer.
		Let $F \in \OO[T_0, ..., T_n]$ be a homogeneous
		polynomial of degree $r$ and let $\mathcal{X}_F \subset \P^{n+1}_\OO$
		 be the hypersurface defined by the homogeneous
		equation
		$F(T_0,...,T_n) − tT_{n+1}^r$. Let $f_F$ be the morphism $ \mathcal{X}_F  \to \Spec k[t]$.
		Suppose that $V(\bar{F}) \in \P^n_k$ is smooth, and let $o$ be the vertex of the projective cone $ f^{-1} (0)$. Then
			\[\Delta\chi(f_F ) =   \< r\> - \<1\> + (-\<r\>)^{n} \cdot \mu_{f_F,o}^q \]
			in $\GW(k)$.
		\end{Th}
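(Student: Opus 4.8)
The plan is to reduce the (global, proper) invariant $\Delta\chi(f_F)$ to the local invariant at the vertex $o$, to rewrite that via Theorem~\ref{th:Az} in terms of two explicit smooth hypersurfaces, and finally to match the resulting $\GW(k)$-class with the right-hand side by combining the quadratic Euler characteristic of hypersurfaces with an Eisenbud--Khimshiashvili--Levine type identification. Throughout $\Char k=0$, so $\chic$ is available.

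\emph{Step 1: reduction to the vertex.} Since $f_F$ is proper, the proper-pushforward formula gives $\Sp_t\chic(X_\eta)=\chic(\Psi_{f_F}\one)$, hence $\Delta\chi(f_F)=\chic(\Psi_{f_F}\one)-\chic(X_\sigma)$. The special fibre $X_\sigma=V_{\P^{n+1}_k}(\bar F)$ is the projective cone with vertex $o$ over the smooth hypersurface $V_{\P^n_k}(\bar F)$, so $f_F$ is smooth away from $o$; therefore, by $\Psi_{\id}\one\simeq\one$ and compatibility of $\Psi$ with smooth pullback (Property~\ref{property}), $\Psi_{f_F}\one$ restricts to the constant spectrum on $X_\sigma\setminus\{o\}$ (the argument used in the proof of Proposition~\ref{cor:computability}). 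Feeding the localization triangle on $X_\sigma$ for the closed point $o$ and its open complement into $\chic$, and using additivity of the motivic measure $\chic$, one gets $\chic(\Psi_{f_F}\one)=\chic(X_\sigma\setminus\{o\})+\chic(\Psi_{f_F}\one|_o)$ and $\chic(X_\sigma\setminus\{o\})=\chic(X_\sigma)-\<1\>$. Hence $\Delta\chi(f_F)=\chic(\Psi_{f_F}\one|_o)-\<1\>=\Delta\chi(f_F)_o$; equivalently, $\Delta\chi(f_F)=\chic(\Phi_{f_F}\one)$ and the vanishing cycles spectrum is supported at $o$.

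\emph{Step 2: the homogeneous singularity computation.} In the \'etale coordinates $x_i=T_i/T_{n+1}$ around $o$ the equation of $\mathcal X_F$ is $f_F^*(t)=\bar F(x_0,\dots,x_n)$ with $V(\bar F)\subset\P^n$ smooth, so $o$ is a homogeneous singularity defined by $\bar F$ and $\mu^q_{f_F,o}=\mu^q_{\bar F,0}$ is the EKL form of $\bar F$ at the origin. By Corollary~\ref{cor:K0id} (equivalently Theorem~\ref{th:Az}) together with $\chic(\Aa^1)=\<-1\>$, one gets $\chic(\Psi_{f_F}\one|_o)=\chic(D)-\<-1\>\chic(C)$, where in the notation of \eqref{clopimmersion} $D=V_{\P^{n+1}_k}(\bar F-T_{n+1}^r)$ and $C=D\cap\{T_{n+1}=0\}=V_{\P^n_k}(\bar F)$; by the Jacobian criterion $D$ is a smooth hypersurface of degree $r$ in $\P^{n+1}$ and $C$ one of degree $r$ in $\P^n$, of dimensions $n$ and $n-1$ respectively. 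It therefore remains to establish, in $\GW(k)$,
\[
\chic(D)-\<-1\>\chic(C)=\<r\>+(-\<r\>)^n\,\mu^q_{\bar F,0}.
\]

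\emph{Step 3: Euler characteristic of the hypersurfaces, and the EKL identification.} For a smooth projective hypersurface $Z$ of dimension $m$ the rational motive is the Tate part $\bigoplus_{i=0}^{m}\one(i)[2i]$, contributing $\sum_{i=0}^m\<(-1)^i\>=\chic(\P^m)$, plus the primitive middle summand in degree $m$; for $m$ odd this carries a symplectic form and contributes $-\tfrac12\Dim H^m_{\mathrm{prim}}(Z)\cdot(\<1\>+\<-1\>)$, while for $m$ even it contributes the symmetric cup-product form on $H^m_{\mathrm{prim}}(Z)$. Applying this to $D$ and $C$ and identifying the primitive cohomology: realizing $D$ (for $n$ even), resp.\ $C$ (for $n$ odd), as the general fibre of the pencil $\{s\bar F-tT_{n+1}^r\}$, resp.\ of the analogous pencil for $C$, degenerating to a cone over its vertex, the primitive middle cohomology is the vanishing cohomology of that degeneration, i.e.\ the reduced cohomology of the Milnor fibre of $\bar F$ (carrying the $\<r\>$-scaled monodromy). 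A quadratic refinement of the Eisenbud--Khimshiashvili--Levine theorem --- for a (quasi-)homogeneous singularity the mixed Hodge structure, hence the intersection form, is explicit --- identifies the resulting symmetric form, for $n$ even, with $(-\<r\>)^n\mu^q_{\bar F,0}$ up to a hyperbolic summand and a Tate correction produced by the $\<r\>$-scaling; for $n$ odd one argues in parallel with $C$, noting that $\chic(D)$ is then purely hyperbolic. Substituting and simplifying in $\GW(k)$ --- tracking the Tate summands $\chic(\P^m)$, the hyperbolic terms (which collapse via $\<a\>(\<1\>+\<-1\>)=\<1\>+\<-1\>$), the orientation twist $\<-1\>$, and the degree factor $\<r\>$ --- yields the displayed identity; alternatively one invokes the general hypersurface Euler-characteristic formula of \cite{Le20b}, which is stated so as to make this substitution immediate.

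\emph{The main obstacle} is Step 3: the quadratic-refined identification of the geometric symmetric form on middle primitive cohomology with the algebraic EKL form $\mu^q_{\bar F,0}$ (this is where the homogeneity and the normalization by $\<r\>$ enter), together with the bookkeeping needed to collapse all the $\<-1\>$, $\<r\>$ and hyperbolic contributions onto exactly $\<r\>-\<1\>+(-\<r\>)^n\mu^q_{f_F,o}$, and the attendant parity-of-$n$ case split. Steps 1 and 2 are formal consequences of the specialization-system formalism and of Theorem~\ref{th:Az}.
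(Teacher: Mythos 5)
Your Steps 1 and 2 are sound and consistent with the formalism in the paper: the reduction to the vertex via smooth pullback and localization, and the identification $\chic(\Psi_{f_F}\one|_o)=\chic(D)-\<-1\>\chic(C)$ via Corollary~\ref{cor:K0id}, are correct (and you are right to use $C$ rather than $A$ in the subtracted term). But note that they only repackage the problem: the paper gets to the same place more directly by writing $\chic(X_\sigma)=\<1\>+\<-1\>\chic(C)$ for the projective cone and observing that $\Sp_t\chic(X_\eta)$ is the Euler characteristic of a smooth degree-$r$ hypersurface. Either way, everything reduces to computing $\chic$ of the two smooth projective hypersurfaces $D$ and $C$ in terms of the Jacobian ring of $\bar F$, which is your Step~3 --- and that is where the entire content of the theorem lives.

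Step 3 as written is a genuine gap, not a proof. The assertion that the symmetric form on middle primitive cohomology equals ``$(-\<r\>)^n\mu^q_{\bar F,0}$ up to a hyperbolic summand and a Tate correction produced by the $\<r\>$-scaling'' is precisely the statement to be proved; you neither specify these correction terms nor give a mechanism that produces them, and the route you propose (primitive cohomology $=$ vanishing cohomology of a pencil degeneration, plus ``a quadratic refinement of the Eisenbud--Khimshiashvili--Levine theorem'' for the intersection form) is not an available citation --- at the level of $\GW(k)$ rather than of ranks, no such comparison between the geometric intersection form on vanishing cohomology and the algebraic Scheja--Storch form is established anywhere in the sources you could invoke. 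The actual argument of \cite{Le20b}, which the paper's sketch points to, is different and is what makes the identification algebraic: the motivic Gauss--Bonnet formula of \cite{LR} expresses $\chic(Z)$ for $Z$ smooth projective as the sum of the cup-product/trace pairings on Hodge cohomology $\bigoplus_{p,q}H^q(Z,\Omega^p)$, and for a smooth hypersurface Griffiths' description identifies the primitive part with explicit graded pieces of the Jacobian ring of the defining polynomial, with the pairing given by multiplication into the socle --- i.e.\ literally the form $B$ defining $\mu^q$. Applied to $D=V(\bar F-T_{n+1}^r)$, whose Jacobian ring factors as $J(\bar F)\otimes k[T_{n+1}]/(T_{n+1}^{r-1})$, this is what produces the factor $(-\<r\>)^n$ and the term $\<r\>-\<1\>$ after the bookkeeping you describe. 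Without this (or an equally explicit substitute), your Step 3 is an appeal to the conclusion rather than a derivation of it.
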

	\begin{sketch} We will not give many details as this is the content of the majority of the paper \cite{Le20b}.
		The formula computes the difference of Euler characteristics of two projective hypersurfaces. For the Euler characteristic of a projective hypersurface it is possible to use the motivic Gauss-Bonnet formula, which describes $\chic(-)$ in terms of a symmetric bilinear forms on Hodge cohomology groups, \cite{LR}. In the case of a projective hypersurface, the graded ring of Hodge cohomology can be compared to the Jacobian ring, this is done in \cite[\S~3]{Le20b}. This way we get an expression for $\chic(-)$ of a projective hypersurface in terms of the quadratic Milnor number. To get the exact values for the conductor formula one may take the rank, and compare with the classical Milnor formula. 
	\end{sketch}
	
	\begin{Th}[Generalized quadratic conductor formula {\cite[Corllary~7.3]{Az}}] \label{th:chiformula}
			Let $f: X \rightarrow \Spec \OO$ be according to Assumption~\ref{assumption}, of relative dimension $n$, {with $f$ proper} and $k$ of characteristic $0$. Suppose that $X_\sigma$ is smooth away from isolated singular points $\{p_1,...,p_s\}$, where each $p_i$ is a homogeneous singularity, defined by the polynomial $F_i$ of degree $r_i$, Definition~\ref{def:homogeneoussingularity}. Then
			\[
			\Delta\chi(f) = \sum_i \Tr_{k(p_i)/k} \left( \< r_i\> - \<1\> + (-\<r_i\>)^{n} \cdot \mu_{f,p_i}^q \right) .\]
	\end{Th}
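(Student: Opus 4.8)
The plan is to reduce the global statement to the local Theorem~\ref{th:LPLS} by a localization/additivity argument. First I would invoke the properness of $f$ together with the formula $\chic(\Psi_f\one) = \Sp_t \chic(X_\eta)$ and the definition $\Delta\chi(f) = \Sp_t\chic(X_\eta) - \chic(X_\sigma)$ to rewrite
\[
\Delta\chi(f) = \chic(\Psi_f\one) - \chic(X_\sigma).
\]
Next I would use the fact that $\chic$ is a well-defined additive invariant on $K_0(\SH^c(X_\sigma))$, so both terms can be computed by a stratification of $X_\sigma$ adapted to the singular points. Write $U := X_\sigma \setminus \{p_1,\dots,p_s\}$ for the smooth locus, with open immersion $j_U$ and closed complement the finite set of points $o_i\colon p_i \hookrightarrow X_\sigma$. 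The localization sequence gives, in $K_0(\SH^c(X_\sigma))$,
\[
[\Psi_f\one] = [j_{U!}j_U^*\Psi_f\one] + \sum_i [o_{i*}o_i^*\Psi_f\one], \qquad [\one_{X_\sigma}] = [j_{U!}\one_U] + \sum_i [o_{i*}\one_{p_i}],
\]
and pushing forward to $\Spec k$ and applying $\chic$ (which commutes with $\Tr_{k(p_i)/k}$ on the point contributions). On the smooth locus $U$, by Remark~\ref{rem:deltasmooth} (or directly by Property~\ref{property} applied over $U$, where $f$ is smooth) the nearby cycles restricted to $U$ agree with $\one_U$, so those two contributions cancel. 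Therefore
\[
\Delta\chi(f) = \sum_i \Tr_{k(p_i)/k}\Bigl( \chic(\Psi_f\one|_{p_i}) - \<1\> \Bigr) = \sum_i \Tr_{k(p_i)/k} \bigl(\Delta\chi(f)_{p_i}\bigr).
\]

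The second step is to identify each local term $\Delta\chi(f)_{p_i} = \chic(\Psi_f\one|_{p_i}) - \<1\>$ with the right-hand side $\< r_i\> - \<1\> + (-\<r_i\>)^n\cdot\mu^q_{f,p_i}$. For this I would apply Theorem~\ref{th:Az} (which is purely local at $p_i$, since it only refers to Assumption~\ref{assumption} and the homogeneous singularity structure) to get
\[
\chic(\Psi_f\one|_{p_i}) = \chic(D_i) - \<-1\>\chic(A_i),
\]
where $D_i = V_{\P^{n+1}}(F_i - T_{n+1}^{r_i})$ and $A_i = D_i \setminus V(T_{n+1})$. It remains to show that this quantity equals $\< r_i\> + (-\<r_i\>)^n\cdot\mu^q_{f,p_i}$ in $\GW(k(p_i))$. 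This is exactly the local content extracted in the proof of Theorem~\ref{th:LPLS}: there, $\Delta\chi(f_{F_i})$ for the model hypersurface $\mathcal{X}_{F_i}$ is computed via the motivic Gauss--Bonnet formula and the comparison of Hodge cohomology with the Jacobian ring, yielding precisely $\< r_i\> - \<1\> + (-\<r_i\>)^n\cdot\mu^q_{f_{F_i},o_i}$. Since a homogeneous singularity $p_i$ defined by $F_i$ is, up to the local data controlling $\Psi_f\one|_{p_i}$ and the Jacobian form, the same as the vertex singularity of the model $\mathcal{X}_{F_i}$ (both the nearby-cycles class, by Proposition~\ref{prop:nearbypoint} and Theorem~\ref{th:Az}, and the quadratic Milnor number $\mu^q$, which depends only on the image of $f^*(t)$ in $\OO_{X,p_i}$ modulo the relevant power of the maximal ideal), the two local invariants agree. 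Combining with the first step gives the theorem.

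The main obstacle I expect is not the global additivity — that is routine localization in $K_0(\SH^c(X_\sigma))$ — but rather making precise the comparison between the local invariant $\Delta\chi(f)_{p_i}$ of an \emph{arbitrary} homogeneous singularity and the invariant of the \emph{model} cone hypersurface $\mathcal{X}_{F_i}$ of Theorem~\ref{th:LPLS}. One must check that $\chic(\Psi_f\one|_{p_i})$ depends only on $F_i$ (degree and the polynomial), which follows from Theorem~\ref{th:Az} since the right-hand side $\chic(D)-\<-1\>\chic(A)$ is manifestly a function of $F_i$ alone; and that the quadratic Milnor number $\mu^q_{f,p_i}$ likewise depends only on $F_i$ through the Scheja--Storch/EKL form on the Jacobian ring $J(F_i)$, which is immediate because modulo higher-order terms $f^*(t) = F_i$ and the Jacobian ideal is generated by the $\partial F_i/\partial x_j$. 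Once both local invariants are seen to coincide with those of the model, Theorem~\ref{th:LPLS} supplies the closed formula, and one finishes by applying $\Tr_{k(p_i)/k}$ and summing. A minor point to handle is the passage between the two bases in Assumption~\ref{assumption} (the trait $\Spec\OO$ versus $\Aa^1_k$) so that Theorem~\ref{th:LPLS}, stated over $\Spec k[t]$, applies to $f$ over $\Spec\OO$; this is covered by the localization remark in Assumption~\ref{assumption} and the smooth-pullback compatibility of $\Psi$.
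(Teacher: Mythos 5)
Your overall strategy is the same as the paper's: reduce to the local contributions $\Delta\chi(f)_{p_i}$ using properness and the triviality of $\Delta\chi$ on the smooth locus, identify $\chic(\Psi_f\one|_{p_i})$ with $\chic(D_i)-\<-1\>\chic(A_i)$ via Theorem~\ref{th:Az} (so that the local nearby-cycles invariant visibly depends only on $F_i$ and matches that of the model cone hypersurface $\mathcal{X}_{F_i}$), and then conclude from the hypersurface formula of Theorem~\ref{th:LPLS}. The explicit localization in $K_0(\SH^c(X_\sigma))$ is a fine way to package the first step.

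There is, however, one step you dismiss too quickly: the comparison $\mu^q_{f,p_i}=\mu^q_{f_{F_i},o}$ of quadratic Milnor numbers is not ``immediate.'' By Definition~\ref{def:homogeneoussingularity} one only has $f^*(t)=F_i$ modulo $m^{r_i+1}$, so $df$ and $dF_i$ differ by higher-order terms, the Jacobian ideal of $f$ is generated by the $\partial f/\partial x_j$ and not literally by the $\partial F_i/\partial x_j$, and the EKL/Scheja--Storch form is defined from $f$ itself, not from its truncation. That the class $e^{\GW}_p(\Omega_{X/k},df)$ is unchanged by such a perturbation is exactly what must be proved; the paper does this by a homotopy-invariance argument (deforming $F_i$ to $F_i+h$ through sections with isolated zero locus and using deformation invariance of the local Euler class, \cite[\S~6]{Az}). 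Your parenthetical claim that $\mu^q$ ``depends only on the image of $f^*(t)$ in $\OO_{X,p_i}$ modulo the relevant power of the maximal ideal'' is the statement to be established, not a definition. Supplying that argument (or citing it) closes the only real gap in your proposal.
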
 
	\begin{sketch}
		By the smooth pullback property of nearby cycles (see Remark~\ref{rem:deltasmooth}), we have 
		\[
		\Delta\chi(f) = \sum_i \Delta \chi (f)_{p_i}.
		\]
		 Now for each $i$, $\Delta \chi (f)_{p_i}$ can be computed using Theorem~\ref{th:Az}, and interpreting $D$ and $C$ according to \eqref{clopimmersion}, we get 
		\[
		\Delta \chi (f)_{p_i} = \Tr_{k(p_i)/k} (\Delta \chi (f_{F_i})).
		\]
		 We then prove that the quadratic Milnor numbers $\mu^q_{f,p}$ and $\mu^q_{f_F,o}$ are equal by a homotopy invariance argument in \cite[\S~6]{Az}. Then we conclude using Theorem~\ref{th:LPLS}.
	\end{sketch}
	\begin{Rem}
		Comparing with the classical formulas notice the following differences: First, since we work only with singularities whose order is prime to the characteristic, the Swan conductor in the classical formula \eqref{eq:DeligneFormula} is $0$, so we do not have to deal with in the case we are considering. Second, in our formula appear some correction terms which depend on the degree $r$ of the singular point, and have no remnant in the classical formulas: the term $ \< r\> - \<1\> $ vanishes when we take complex or real realizations (via the rank and signature respectively), and the term $(-\<r\>)^n$ equals $(-1)^n$ under those realizations, as in the classical formulas. It remains to see if simpler expressions (as we get after cancelling out terms as in Example~\ref{example}) are valid for some class of maps $f$, possibly also when the singular locus is not isolated.
	\end{Rem}
	\begin{Ex}  \label{example}
			\begin{enumerate}
		\item Let $p$ be an ordinary double point, defined by a polynomial $F = x^2 - y^2$.  We have then $J(f,p) \simeq k$, $\mu^q_{f,p} = \<-1\>$, $r=2$, $n=1$, and so we get
		\[
		\Delta \chi (f)_p = \<2\> - \<1\> -  \<2\>\<-1\> = - \<-1\> .
		\]
		Assuming $p$ is the only singular point, this is also the value of $\Delta \chi (f)$.
		\paragraph{Exercise}(suggested by Yonatan Harpaz). Compute this directly as a difference of Euler characteristics, without using the formulas above. However, you may use the complex realization via rank, and the formula for topological Euler characteristic of a curve by the genus.
		\item Let $p$ be defined by $F=x^2 -y^3$. This is a quasi-homogeneous singularity, putting as weights $(a_1,a_2) = (3,2)$.
		The formula then is  $ \Delta \chi (f)_p = \< a_1 a_2 r\> - \<1\> + (-\<  a_1 a_2 r\>)^{n} \cdot \mu_{f,p}^q $. In this case we have $r =6$, $n =1$, and $\mu_{f,p} = \<1\>+ \<-1\>$ (see \cite[Example~5.3]{Orm}). So we have
		\[
		\Delta \chi (f)_p = \<36 \> -\<1\> -\<36\> \<-1\> = - \<-1\>.
		\]
	\end{enumerate}
	\end{Ex}
	
	\section{The motivic monodromy} \label{sec:4}
	
	This section is due to the work of Emil Jacobsen and the author \cite{AJ}.
	\paragraph{The monodromy and the variation}In this section we work in category $\DM_\mathbb{Q}(-)$ of $H \mathbb{Q}$ modules in $\SH(-)$; $\Psi_f$ here denotes the unipotent version of motivic nearby cycles (denoted by $\Upsilon_f$ in Ayoub's papers). Our $f \colon X \to \Aa^1_k$ satisfies Assumption~\ref{assumption}, and $k$ is of characteristic $0$.
	We have the following fiber sequence (\cite[Theor\'eme~3.6.46]{Ay07a})
	\[ \bar{i}^* \bar{j}_*\to \Psi_f \xrightarrow{N} \Psi_f(-1) .\]
	
	The map $N_f : \Psi_f \one \to \Psi_f \one (-1)$ is the \emph{motivic monodromy operator}, a motivic enhancement for \eqref{eq:Netale}. Ayoub proved that just like the classical case, $N_f$ is nilpotent \cite[Corollaire~3.6.49]{Ay07a}.
	
	Let $c: C \hookrightarrow X_\sigma$ be the singular locus of $f$. The monodromy is trivial away from it, therefore it factors through $C$ in the following manner:
	\[
	N_f: \Psi_f \one \xrightarrow{\eta_C} c_*c^* \Psi_f \one \xrightarrow{c_* \var_f} c_* c^! \Psi_f \one (-1) \xrightarrow{\epsilon_c} \Psi_f \one (-1) . 
	\] 
	Since $c_*$ is fully faithful, we have the well defined \emph{variation map}
	\[ \var_f : c^* \Psi_f \one \to c^! \Psi_f \one (-1) .\]
	As this captures the non-trivial part of the monodromy, all we have to do is to compute $\var_f$, which is our goal in this section. 
	
	\begin{Th}[The abstract Picard--Lefschetz Theorem, {\cite[Theorem~5.7]{AJ}}] \label{th:abstractthm}
		Let $f \colon X \to \Aa^1$ be a morphism, Assumption~\ref{assumption}, with a homogeneous singularity defined by a polynomial $F$ of degree $r$ (Def.~\ref{def:homogeneoussingularity}), then,
		\[\begin{tikzcd}
			o^*\Psi_f\one   \arrow[d, "\sim" ' {rotate=90, anchor=north}] 	\arrow[rr, "-r \cdot var"] & & o^!\Psi_f\one(-1) \\
			h( {A} )	\arrow[r, "\alpha"]  	&	h(C) (-1)[-1]  \arrow[r, "\beta(-1)"]
			&	h^c(A)(-1)  \arrow[u, "\sim" {rotate=90, anchor=south} ]  .
		\end{tikzcd}\]
		The vertical isomorphisms are those of Proposition~\ref{prop:nearbypoint}; $\alpha$ and $\beta$ are the natural maps \eqref{eq:alpha}, \eqref{eq:beta}.
	\end{Th}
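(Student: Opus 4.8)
The plan is to push the whole square through the semistable reduction $g\colon Y\to S'$ of Proposition~\ref{prop:nearbypoint}, where the monodromy of a semistable degeneration with two components can be computed by localization triangles, and then descend to $f$, absorbing the factor introduced by the $r$-th root base change.

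\textbf{Reduction to the semistable model.} By Proposition~\ref{prop:SStableRed} we have $\Psi_f\one\simeq q_*\Psi_g\one$ with $q:=(r\circ\pi)_\sigma\colon Y_\sigma\to X_\sigma$ proper and, by the construction recalled in the proof of Proposition~\ref{prop:nearbypoint}, $D:=q^{-1}(p)\simeq V(F-T_{n+1}^r)$, $C:=D\cap D'\simeq V(F)$, $A:=D\setminus C$, as in \eqref{clopimmersion}. Proper base change along $o\colon p\hookrightarrow X_\sigma$ gives $o^*\Psi_f\one\simeq w_*u^*\Psi_g\one$ and $o^!\Psi_f\one\simeq w_*u^!\Psi_g\one$, for $u\colon D\hookrightarrow Y_\sigma$ and $w\colon D\to\Spec k(p)$; combined with Proposition~\ref{cor:computability} and its dual, and with $C,D$ projective, this is precisely the pair of vertical isomorphisms in the statement, and it identifies the $w_*$-pushforwards of the maps $\alpha,\beta$ of \eqref{eq:alpha}, \eqref{eq:beta} with the connecting maps of the localization triangles entering Proposition~\ref{prop: triangle} and its dual. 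The monodromy operator is compatible with proper pushforward, so $N_f\one$ is $q_*$ of the monodromy upstairs, \emph{except} for the base change $X_r\to X$, which replaces the tame parameter $t$ by $t^{1/r}$: a generator of the tame inertia of $S$ is the $r$-th power of a generator of the tame inertia of $S'$, whence $N_{f_r}=r\cdot N_f$ and $N_f\one\simeq\tfrac1r\,q_*N_g\one$ under the identification. Since the singular locus $C$ of $g$ lies in $D$, restricting $N_g$ to $D$ gives a well-defined map $\operatorname{var}_g^D\colon u^*\Psi_g\one\to u^!\Psi_g\one(-1)$ through which $u^*N_g$ factors, and $\operatorname{var}_f$ at $p$ becomes $\tfrac1r\,w_*\operatorname{var}_g^D$ under the isomorphisms above. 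Thus the factor $-r$ is accounted for, and it remains to show $\operatorname{var}_g^D=-\bigl(\beta(-1)[-1]\bigr)\circ\alpha$, after the identifications $u^*\Psi_g\one\simeq j_*\one_A$, $u^!\Psi_g\one\simeq j_!\one_A$ of Proposition~\ref{cor:computability} and its dual.

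\textbf{The variation of a two-component semistable degeneration.} Here $Y_\sigma=D\cup D'$ with $C=D\cap D'$ smooth of codimension $1$. By Property~\ref{property} and $\Psi_{\id}\one\simeq\one$, $N_g$ is trivial over $Y_\sigma\setminus C$, hence supported on $C$; under the identifications, $\operatorname{var}_g^D$ is a map $j_*\one_A\to j_!\one_A(-1)$, and two $\mathrm{Hom}$-computations pin it down up to a universal scalar. First, precomposing $\operatorname{var}_g^D$ with the map $\one_D\to j_*\one_A$ appearing in the fiber sequence of Proposition~\ref{prop: triangle} gives an element of $\mathrm{Hom}_{\DM_\Q(D)}(\one_D,\,j_!\one_A(-1))$, which vanishes: from the dual triangle $i_*\one_C[-1]\xrightarrow{\beta[-1]}j_!\one_A\to\one_D$ this group is squeezed between $\mathrm{Hom}(\one_D,i_*\one_C(-1)[-1])=H^{-1}_{\mathrm{mot}}(C,\Q(-1))$ and $\mathrm{Hom}(\one_D,\one_D(-1))=H^{0}_{\mathrm{mot}}(D,\Q(-1))$, both $0$ by negativity of the weight. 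Hence $\operatorname{var}_g^D$ factors through $\alpha\colon j_*\one_A\to i_*\Sigma^{-\mathcal N_i}\one_C[1]\simeq i_*\one_C(-1)[-1]$. Second, $\mathrm{Hom}_{\DM_\Q(D)}\bigl(i_*\one_C(-1)[-1],\,j_!\one_A(-1)\bigr)$ is spanned, on each connected component of $C$, by the class of $\beta(-1)[-1]$ — again from the dual triangle, using $\mathrm{Hom}(i_*\one_C,i_*\one_C)=H^0_{\mathrm{mot}}(C,\Q)$ and the vanishing of $\mathrm{Hom}(i_*\one_C,\one_D)$ and $\mathrm{Hom}(i_*\one_C,\one_D[1])$. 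Therefore $\operatorname{var}_g^D=\lambda\cdot\beta(-1)[-1]\circ\alpha$ for a scalar $\lambda$ (constant on $C$ by the homogeneity of the construction), nonzero because the monodromy is nontrivial. Morally, this is the motivic Picard--Lefschetz mechanism: the vanishing cohomology lives on $C$, and $N$ enters and exits through the intersection divisor.

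\textbf{The scalar, and the main obstacle.} It remains to check $\lambda=-1$, so that together with the $\tfrac1r$ of the base change the top arrow is exactly $-r\cdot\operatorname{var}$. This is a normalization, fixed by one comparison: applying the Betti (equivalently $\ell$-adic) realization turns $\operatorname{var}_g^D$ into the classical variation of a two-component semistable degeneration, for which the Picard--Lefschetz formula fixes the constant; equivalently, one evaluates both sides of the claimed square on the universal ordinary double point $F=\sum_{0\le i\le n}T_i^2$, $r=2$, where $\operatorname{var}_f$, $\alpha$ and $\beta$ are computed by hand. Since every map and identification in sight is defined over $\Q$ and compatible with realization, one such example suffices. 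I expect the real difficulty to lie in the second step: not in showing that $\operatorname{var}_g^D$ is a \emph{multiple} of $\beta(-1)[-1]\circ\alpha$, which is the clean $\mathrm{Hom}$-computation above, but in carrying it all the way through Ayoub's definition of the monodromy triangle $\bar i_g^*\bar j_{g*}\to\Psi_g\xrightarrow{N_g}\Psi_g(-1)$ so as to extract a well-defined $\operatorname{var}_g^D$, to verify it is nonzero, and to keep straight the Thom twist $\Sigma^{-\mathcal N_i}$ (which becomes $(-1)[-2]$ in $\DM_\Q$) together with all the signs; the base-change factor $r$, though conceptually transparent, likewise needs the precise comparison of the two tame parameters $t$ and $t^{1/r}$.
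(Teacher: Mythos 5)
Your overall architecture matches the paper's: reduce to the two-branched semistable model via the construction of Proposition~\ref{prop:nearbypoint}, identify $o^*\Psi_f\one$ and $o^!\Psi_f\one$ with $h(A)$ and $h^c(A)$, and use the two localization triangles together with $\operatorname{Hom}(\one,\one(-1))=0$ to force the variation to factor as $\beta(-1)\circ\lambda\circ\alpha$ for a scalar $\lambda\in H^0_{\mathrm{mot}}(C,\mathbb{Q})$ --- this is exactly Steps III and IV of the paper's proof, and your Hom-computations are correct. Where you genuinely diverge is in pinning down the two normalizations: the scalar $\lambda=-1$ and the factor $r$ coming from the base change $S[t^{1/r}]\to S$. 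The paper obtains both from a purely motivic computation (its Steps I and II): the unipotent $\Psi_f$ is built as a logarithm out of the Kummer motive $\mathcal{K}$, one computes $\Psi_{\id}\mathcal{K}\simeq\one\oplus\one(-1)$ with $N_{\id}\mathcal{K}=\begin{pmatrix}0&-1\\0&0\end{pmatrix}$ directly from that construction, and then transports this to the model family $xy=t$ and to all two-branched semistable $g$ by smooth pullback. You instead propose to fix $\lambda$ by Betti or $\ell$-adic realization against the classical Picard--Lefschetz formula, and the factor $r$ by a tame-inertia heuristic. Both can be made to work, but neither is free: in Ayoub's formalism there is no inertia group acting, so ``$N_{f_r}=r\cdot N_f$'' must itself be extracted from the Kummer/logarithm construction (or from $\Psi_f\simeq r_{\sigma*}\Psi_{f_r}r_\eta^*$ plus a monodromy comparison), and the realization argument requires the compatibility of Ayoub's monodromy triangle with the classical one under realization, which is a substantial theorem of \cite{Ay14} rather than something automatically ``in sight.'' Granting those inputs, your route to the normalization is shorter; the paper's route is self-contained and avoids realizations, which --- as its own remark comparing Illusie's proof to Deligne's emphasizes --- is precisely the point. (Minor note: your parenthetical ``nonzero because the monodromy is nontrivial'' is circular as stated, but it is not load-bearing since you determine $\lambda$ by comparison anyway.)
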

	\begin{sketch}
		To make it more interesting, we describe the main steps of the proof in the reverse order of the way it is done in \cite{AJ}.
	\begin{enumerate}

			\item[Step I] First we compute the monodromy on the nearby Kummer motive (because we can).
			The unipotent nearby cycles functor can be constructed as a logarithm \cite[\S~3.6]{Ay07a}, \cite[\S~11]{Ay14}. This involves a construction over a certain $\G_m$ motive, the Kummer motive $\mathcal{K}$. It sits in a canonical fiber sequence,
			\[ \one_{\G_m} \to \mathcal{K} \to \one_{\G_m}(-1) . \]
			Then $\Psi_f(-)$ is given by $\bar{i}^*\bar{j}_* (f_\eta^* \Sym^\infty \mathcal{K} \otimes -)$.
			
			 Using this we can compute
			 \[ \Psi_{\id} \mathcal{K} \simeq \one_{\G_m} \oplus \one_{\G_m}(-1) , \]
			 and
			  \[N_{\id} \mathcal{K} : \Psi_{\id} \mathcal{K} \simeq \one \oplus \one(-1) \to  \one (-1) \oplus \one(-2) \simeq \Psi_{\id} \mathcal{K}(-1) \] to be given by  $\begin{pmatrix} 0 &-1 \\ 0 &0  \end{pmatrix}$, \cite[Proposition~5.6]{AJ}.
			\item[Step II] The monodromy on a two-branched semistable family: Using the computation above we can deduce the monodromy $N_g \one $ for the family \[
			g':  \Spec k[t,x,y]/(xy-t) \to  \Spec k[t] . \]
			$g'$ has generic fiber isomorphic to $\G_m \times \G_m \to \G_m$, while its special fiber is $\Spec k[x,y]/(xy)$. Computing $N_{g'}$ can be done as the pushforward $g'_{\sigma *} \Psi_{g'} \one$ is closely related to $\Psi_{\id} \mathcal{K}$ from the first step, see \cite[Corollary~4.9]{AJ}.
			
			But this $g'$ with its $1$-dimensional special fiber, can serve as model to all semistable maps $g \colon Y \to \Aa^1$ with two smooth branches in the special fiber, $Y_\sigma = D \cup D'$: by using the smooth pullback property, \eqref{property}, we get the monodromy for any such family, \cite[Proposition~3.5]{AJ}. 
			\item[Step III]  Let $g \colon Y \to \Aa^1$ be a semistable morphism, with the special fiber a union of two smooth divisors,  $Y_\sigma = D \cup D'$, and let $C:= D \cap D'$, and $A := D \setminus C$. we then get the following fiber sequences for $g$ (in a similar manner to Proposition~\ref{prop: triangle}, interpreted in $\DM_{\mathbb{Q}}(D)$, {\cite[Proposition~3.7]{AJ}}):
			\[ \one \to c_*c^* \Psi_g \one \simeq h_D(A) \xrightarrow{\alpha} h(C)(-1)[-1] ,\]
			and
			\[  h(C)(-1)[-1] \xrightarrow{\beta(-1)}  h_D^c(A) (-1) \simeq c_*c^! \Psi_g \one (-1)\to \one(-1) . \]
			Note that $c_* \var_g$ is connecting between the middle terms in both sequences. Using the fact that $Hom(\one, \one (-1)) = 0$ in $\DM(-)$, we obtain that $\var_f$ must factor through a map from the last term in the fist fiber sequence, to the first term in the second fiber sequence (which are isomorphic to each other). We have then the decomposition 			
					\[\begin{tikzcd}
				c_*c^*\Psi_g\one   \arrow[d, "\sim" ' {rotate=90, anchor=north}] 	\arrow[rrr, "c_*\var_g"] & & & c_*c^!\Psi_g\one(-1) \\
				h( {A} )	\arrow[r, "\alpha"]  	&	h(C) (-1)[-1] \arrow[r, "\lambda"] & h(C) (-1)[-1] \arrow[r, "\beta(-1)"]
				&	h^c(A)(-1)  \arrow[u, "\sim" {rotate=90, anchor=south} ]  .
			\end{tikzcd}\]
			$\lambda$ is a-priori an unknown endomorphism of $h(C)(-1)[-1]$. But in Step II we computed $\var$ for exactly the same $g$, so we can compare the two and get $\lambda = -1$.
			\item[Step IV]
			In order to compute the monodromy on our original $f: X \to \Aa^1$ 	we reduce to a two branched semistable $g$.
					The construction gives us the natural vertical identifications in the theorem, and using the result of the steps above we obtain $N_f$ and the required $var$, in terms of the maps $\alpha$ and  $\beta$, {\cite[Proposition~3.9]{AJ}}. 
		\end{enumerate}
		\end{sketch}
		\begin{Rem}The proof above follows Illusie's spirit in \cite{Ill}, both in the tools, turning to semistable reduction, and in the principle of a self contained proof without turning to realizations.
		\end{Rem}
		 Note that in the case $r=2$, $C$ and $D$ in the statement of the theorem are smooth projective quadrics, and $A$ is an affine quadrics. So computing their motives and the maps $\alpha$, $\beta$ between them, would give us an explicit description for the monodromy, enhancing Deligne's formula in \cite[\'expos\'e XV]{SGA7II}.
		
			\begin{Prop}[{\cite[Proposition~2]{Ro}}, {\cite[Proposition~6.4]{AJ}}] \label{prop:motivesquadrics}
				Let $k$ be an algebraically closed field of characteristic different than $2$. We then have the following results for projective and affine quadrics.
				\begin{enumerate}
				\item 
					Let $Q \subset \P^{n+1}$ be a smooth quadric of dimension n. Then
				\begin{equation*}
					h(Q) =
					\begin{cases}
						\bigoplus_{i=0}^n \one (-i)[-2i] &		\text{n odd} \\
						\bigoplus_{i=0}^n \one  (-i)[-2i]  \oplus \one  (-n/2)[-n] & \text{n even}
					\end{cases} .
					\end{equation*}
					\item
				Let $A$ be a smooth affine quadric of dimension $n$, i.e., $A$ is isomorphic to $Q \setminus H \subset \P^{n+1} $ where $Q$ is a smooth projective quadric of dimension $n$, $H$ is a projective hyperplane, and $ C:= Q \cap H$ is smooth. Then we have the following fiber sequences:
				\begin{align} \label{eq:motiveA}
			\one  \to	 h(A)  \xrightarrow{m_1} \one(-\lceil n/2 \rceil )[-n] \\
						\one(-\lfloor n/2 \rfloor)[-n] \xrightarrow{m_2}	 h_c(A)  \to	 \one(-n)[-2n].  \label{eq:motivecA}
				\end{align}
			\end{enumerate}
		\end{Prop}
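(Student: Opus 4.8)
The plan is to prove the two parts separately: part~(1) by induction on $n$ using the linear projection from a point of $Q$, and part~(2) by feeding part~(1) into the localization triangle for the pair $C\hookrightarrow Q\hookleftarrow A$.

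Part~(1) is classical (Rost, Karpenko on cellular structures of quadrics); I would reprove it by induction, the base cases $n=0$ (two reduced points) and $n=1$ (a conic, hence $\P^1$) being immediate. For the inductive step, fix a point $x\in Q=Q^n$ and consider the linear projection away from $x$ (a rational map to $\P^n$), resolved by blowing up $x$; this identifies $\Bl_x Q^n$ with $\Bl_{Q^{n-2}}\P^n$, where $Q^{n-2}\subset\P^{n-1}\subset\P^n$ is the smooth $(n-2)$-dimensional quadric at the base of the tangent cone $Q\cap T_xQ$ (smooth since $Q$ is nondegenerate). Applying the blowup formula for motives to both descriptions --- on the left a point of codimension $n$, contributing $\bigoplus_{i=1}^{n-1}\one(-i)[-2i]$; on the right the codimension-two center $Q^{n-2}$, contributing $h(Q^{n-2})(-1)[-2]$ --- and cancelling the common Tate summands, one obtains the recursion
\[
h(Q^n)\simeq \one\oplus\one(-n)[-2n]\oplus h(Q^{n-2})(-1)[-2].
\]
The cancellation is licensed by the standard fact that geometrically cellular varieties have motives which are finite direct sums of Tate twists, together with the observation that two such sums with equal class in $K_0$ are isomorphic (e.g.\ by $\ell$-adic realization). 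Unwinding the recursion down to $Q^0$ or $Q^1$ according to the parity of $n$, the extra middle summand $\one(-n/2)[-n]$ in the even case emerges precisely from $h(Q^0)=\one\oplus\one$.

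For part~(2), note first that the two fiber sequences are exchanged by Poincar\'e duality: $A$ is smooth affine of dimension $n$, so $h_c(A)\simeq h(A)^\vee(-n)[-2n]$, and dualizing \eqref{eq:motiveA} yields \eqref{eq:motivecA} once one observes $n-\lceil n/2\rceil=\lfloor n/2\rfloor$. So it suffices to establish \eqref{eq:motiveA}, and for this I would run the localization (Gysin) triangle for the smooth codimension-one closed immersion $C\hookrightarrow Q$ with open complement $A$, plugging in the motives of $Q$ (dimension $n$) and $C$ (dimension $n-1$) from part~(1). On the Tate summands, the connecting map between $h(Q)$ and the relevant twist of $h(C)$ is the restriction on cohomology: by the Lefschetz hyperplane theorem it is an isomorphism on the Lefschetz summands $\one(-i)[-2i]$ with $1\le i\le n$; it annihilates the unit summand $\one\subset h(Q)$ for degree/weight reasons (e.g.\ $\operatorname{Hom}(\one,\one(1)[2])=0$ over a field, so there is nothing for it to hit); and on the one remaining primitive summand it has rank one. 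That summand is where the asymmetry lives: for $n$ even it is the primitive part $\one(-n/2)[-n]$ of the rank-two middle cohomology of $Q$, while for $n$ odd it is, after the shift built into the Gysin triangle, a class of bidegree $\one(-(n+1)/2)[-n]$ coming from the rank-two middle cohomology of $C=Q^{n-1}$. In both cases the twist equals $\lceil n/2\rceil$, which gives \eqref{eq:motiveA}; then \eqref{eq:motivecA} follows by the duality above.

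The step I expect to be the main obstacle is pinning down this primitive summand exactly --- both its Tate twist ($\lceil n/2\rceil$ versus $\lfloor n/2\rfloor$) and its cohomological degree. This forces one to use the rank-two phenomenon in the middle cohomology of an even-dimensional quadric and to check that the restriction of the two maximal-linear-subspace classes to a smooth hyperplane section has rank one (equivalently, that the restriction of $\eta^{n/2}$ is nonzero), which is an honest self-intersection computation rather than a formal manipulation; the interplay between the twist in the Gysin triangle and the parity of $\dim C$ is precisely where the ceiling-versus-floor distinction is born. A secondary, more bookkeeping-type obstacle is keeping the conventions consistent ($h$ versus $h_c$, and the direction of the Gysin map) and invoking cleanly the fact that geometrically cellular varieties have Tate motives, which underpins both the cancellation in part~(1) and the rank language in part~(2).
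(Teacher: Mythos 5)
Your proposal is correct and, for part~(2), takes essentially the same route as the paper: the localization triangles for $A \hookrightarrow Q \hookleftarrow C$ fed with the Tate decomposition from part~(1), with your Poincar\'e-duality derivation of \eqref{eq:motivecA} from \eqref{eq:motiveA} being an equivalent shortcut for running the second ($j_!$) localization sequence directly, and with the crux correctly identified as the rank of the restriction/Gysin map on the middle cohomology, which produces the $\lceil n/2\rceil$ versus $\lfloor n/2\rfloor$ asymmetry. For part~(1) the paper simply cites Rost, whereas you supply the standard inductive proof via $\Bl_x Q^n \simeq \Bl_{Q^{n-2}}\P^n$ and the blowup formula; the recursion $h(Q^n)\simeq \one\oplus\one(-n)[-2n]\oplus h(Q^{n-2})(-1)[-2]$ and the cancellation of Tate summands (licensed since $\operatorname{Hom}(\one(-i)[-2i],\one(-j)[-2j])=0$ for $i\neq j$) are both sound, so this is a legitimate filling-in of the citation rather than a gap.
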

		
		\begin{sketch}
			1. is a result by Rost in Chow motives \cite[Proposition~2]{Ro} that is still valid in $\DM(-)$.
			
			2. Follows from Rost's result and the localization sequences of the closed and open embedding $A \hookrightarrow Q \hookleftarrow C$, for details see \cite[Popostion~6.4]{AJ}.
		\end{sketch}
	
	\begin{Th}[The motivic Picard--Lefschetz formula, {\cite[Theorem~6.5]{AJ}}]
		Let $k$ be an algebraically closed field of characteristic different than $2$,
		and let $f \colon X \to \Aa^l_k$
		be a flat, quasi-projective morphism, of relative dimension $n$,
		with an isolated non-degenerate quadratic singularity $o$.
		
		Then, if $n$ is even, $\var_f$
		(and therefore the monodromy operator $N_f$) is the zero map.
		
		If $n=2m+1$ is odd,
		$\var_f$ factors as
		\begin{equation*}
			o^*\Psi_f \one
			\xrightarrow{m_1} \one(-m-1)[-n]
			\xrightarrow{-1} \one(-m-1)[-n]
			\xrightarrow{m_2} o^!\Psi_f \one(-1)
			.
		\end{equation*}
	\end{Th}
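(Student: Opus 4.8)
The plan is to specialize the Abstract Picard--Lefschetz Theorem (Theorem~\ref{th:abstractthm}) to the case of a non-degenerate quadratic singularity, where $r=2$, and to make the maps $\alpha$ and $\beta$ of \eqref{eq:alpha}, \eqref{eq:beta} completely explicit using the motivic decompositions of quadrics from Proposition~\ref{prop:motivesquadrics}. By Theorem~\ref{th:abstractthm}, $-2\cdot\var_f$ is identified with the composite $h(A)\xrightarrow{\alpha} h(C)(-1)[-1]\xrightarrow{\beta(-1)} h^c(A)(-1)$, where now $C=V_{\P^{n+1}}(F)$ is a smooth projective quadric of dimension $n-1$, $D=V_{\P^{n+1}}(F-T_{n+1}^2)$ is a smooth projective quadric of dimension $n$, and $A=D\setminus C$ is a smooth \emph{affine} quadric of dimension $n$ (affine since $C$ is the intersection of $D$ with the hyperplane $\{T_{n+1}=0\}$). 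Since $2$ is invertible in $\Q$, dividing by $-2$ is harmless, so it suffices to identify the composite $\beta(-1)\circ\alpha$ up to the scalar.

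First I would unwind the localization triangle for $C\hookrightarrow D\hookleftarrow A$ to see that $\alpha\colon h(A)\to h(C)(-1)[-1]$ and $\beta\colon h(C)\to h^c(A)[1]$ are (shifts/twists of) the boundary maps in the long exact sequences relating $h(D)$, $h(A)$, $h^c(A)$ and $h(C)$, using purity $i^!\one_D\simeq\Sigma^{-\mathcal N_i}\one_C\simeq\one_C(-1)[-2]$ (codimension one). Then I would feed in Proposition~\ref{prop:motivesquadrics}: part (1) gives the closed-form decompositions of $h(C)$ and $h(D)$, and part (2) gives the two-term fiber sequences $\one\to h(A)\xrightarrow{m_1}\one(-\lceil n/2\rceil)[-n]$ and $\one(-\lfloor n/2\rfloor)[-n]\xrightarrow{m_2}h^c(A)\to\one(-n)[-2n]$. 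Comparing the boundary map $\alpha$ against these sequences: the source $\one$-summand of $h(A)$ maps isomorphically to $h(C)$'s degree-zero summand inside $h(D)$, so $\alpha$ kills it and must factor through $h(A)\xrightarrow{m_1}\one(-\lceil n/2\rceil)[-n]$; dually $\beta(-1)$ factors through $m_2$. So the composite factors as $h(A)\xrightarrow{m_1}\one(-\lceil n/2\rceil)[-n]\xrightarrow{\lambda}\one(-\lfloor n/2\rfloor-1)[-n]\xrightarrow{m_2}h^c(A)(-1)$ for some scalar $\lambda$ — but for this middle map to be nonzero we need the source and target twists to agree, i.e.\ $\lceil n/2\rceil=\lfloor n/2\rfloor+1$, which holds iff $n$ is odd. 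Hence when $n$ is even $\var_f$ is forced to vanish, giving the first assertion.

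For $n=2m+1$ odd, both twists equal $-m-1$ and the middle map $\lambda$ is an endomorphism of $\one(-m-1)[-n]$, i.e.\ a rational number; I would pin it down by a rank/degree count against the classical Picard--Lefschetz formula for an ordinary double point in odd relative dimension, or alternatively by tracking the ``$-r$'' normalization in Theorem~\ref{th:abstractthm} with $r=2$ through the explicit boundary maps, which forces $\lambda=-1$ after dividing by $-r=-2$. (Concretely: $\beta(-1)\circ\alpha$ carries an intrinsic factor of $r=2$ coming from the relation $F-T_{n+1}^r$ with $r=2$, and $-r\cdot\var_f=\beta(-1)\circ\alpha$ then yields $\var_f$ equal to $m_1$ followed by $-1$ followed by $m_2$.) This last scalar identification is the main obstacle: everything else is formal diagram-chasing with the localization triangles and Proposition~\ref{prop:motivesquadrics}, but nailing the sign/coefficient requires either a careful comparison with the étale Picard--Lefschetz formula via realization or a hands-on computation of the boundary maps in the quadric decomposition, and one must check the identification is compatible with the ``smooth branch'' reduction used in Steps II--IV of the sketch of Theorem~\ref{th:abstractthm} so that no further automorphism of $\one(-m-1)[-n]$ is introduced.
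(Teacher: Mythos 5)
Your argument follows the paper's proof in essentially the same way: specialize the abstract Picard--Lefschetz theorem (Theorem~\ref{th:abstractthm}) to $r=2$, feed in the quadric computations of Proposition~\ref{prop:motivesquadrics}, and use vanishing of maps into Tate twists to force the factorization through the middle map and, in the even case, the vanishing of that map. Two small caveats: the vanishing criterion you actually need is $\mathrm{Hom}_{\DM(k)}(\one,\one(q)[p])=0$ for $q<0$ (here the target twist is strictly lower than the source, so this applies directly; ``the twists must agree'' is not the right general principle, though with shift $[0]$ and $\mathbb{Q}$-coefficients over a field it happens to be equivalent), and in the odd case the scalar $\lambda=-1$ is precisely the content of the explicit computation of $\alpha$ and $\beta$ in \cite[Proposition~6.4]{AJ} --- your parenthetical ``intrinsic factor of $r=2$'' is an assertion rather than a derivation, but the paper's own sketch defers this step to the same reference, so you have correctly isolated the one genuinely non-formal input.
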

	\begin{sketch}
		The theorem follows from Theorem~\ref{th:abstractthm} and the explicit computations of Proposition~\ref{prop:motivesquadrics}
		We will give the proof for the even case. The odd case is similar but also requires the computation of the maps $\alpha$, $\beta$, see \cite[Proposition~6.4]{AJ}.
		In the even case, $n=2m$, the $\var$ map, when identifying the term as in Proposition~\ref{prop:nearbypoint}, is given by $\var_f \colon h(A) \to h_c(A)(-1) $. Precomposing with the first arrow of \eqref{eq:motiveA}, and composing with the last arrow of \eqref{eq:motivecA}, we get a map $ \one \to \one(-n-1)[-2n]$ which must be zero, as in $\DM(-)$ there are no maps to a negative twist. It follows that $\var_f$ factors through a map from the cofiber in  \eqref{eq:motiveA} to the fiber in  \eqref{eq:motivecA} twisted by $(-1)$, i.e., a map
		\[
		\one(-m )[-n] \to \one(m-1)[-n] .
		\]
		But this is zero for the same reason, so $\var_f = 0$.
	\end{sketch}	
	
	\bigskip

\end{document}